\newtheorem{theorem}{Theorem}
\newtheorem{proposition}{Proposition}
\newtheorem{lemma}{Lemma}
\newtheorem{example}{Example}
\newtheorem{definition}{Definition}
\newtheorem{assumption}{Assumption}
\newtheorem{corollary}{Corollary}
\newtheorem{remark}{Remark}
\newtheorem{observation}{Observation}
\newcommand{\R}{\mathbb{R}}
\newcommand{\X}{\mathbb{X}}
\newcommand{\Z}{\mathbb{Z}}
\newcommand{\U}{\mathbb{U}}
\newcommand{\TTS}{\text{TTS}}
\newcommand{\G}{\mathcal{G}}
\newcommand{\E}{\mathcal{E}}
\newcommand{\V}{\mathcal{V}}
\newcommand{\M}{\mathcal{M}}
\newcommand{\D}{\Omega_d}
\newcommand{\C}{\mathcal{C}}
\newcommand{\Su}{\Omega_s}
\newcommand{\W}{\Omega_w}
\newcommand{\N}{\mathcal{N}}
\newcommand{\rR}{\underline{R}}
\newcommand{\I}{ {\bf I} }
\newcommand{\de}{d_e \big( \rho_e(t) \big)}
\newcommand{\se}{s_e \big( \rho_e(t) \big)}
\title{A Convex Reformulation of the Robust Freeway Network Control Problem with Controlled Merging Junctions}
\author{Marius Schmitt\footnote{Automatic Control Laboratory, ETH Zurich, Physikstrasse 3, 8092 Zurich, Switzerland}, John Lygeros$^*$}
\begin{document}

\maketitle

\begin{abstract}
In the freeway network control (FNC) problem, the operation of a traffic network is optimized using only flow control. For special cases of the FNC problem, in particular the case when all merging flows are controlled, there exist tight convex relaxations of the corresponding optimization problem. However, model uncertainty, in particular regarding the fundamental diagram and predictions of future traffic demand, can be a problem in practice. This uncertainty poses a challenge to control approaches that pursue a model- and optimization-based strategy. In this work, we propose a robust counterpart to the FNC problem, where we introduce uncertainty sets for both the fundamental diagram and future, external traffic demands and seek to optimize the system operation, minimizing the worst-case cost. For a network with controlled merging junctions, and assuming that certain technical conditions on the uncertainty sets are satisfied, we show that the robust counterpart of the FNC problem can be reduced to a convex, finite-dimensional and deterministic optimization problem, whose numerical solution is tractable.
\end{abstract}

\section{Introduction} \label{sec:introduction} 

Dynamics traffic assignment (DTA) refers to a broad spectrum of problems in road traffic control, in which a traffic network is actively controlled via traffic lights, active traffic routing, variable speed limits etc.\ in order to optimize an objective, for example the Total Time Spent (TTS) in the network \cite{peeta2001foundations}. A special case is the freeway network control (FNC) problem, which studies the optimal operation of a road network using only flow control. Such an optimization-based control approach employs a macroscopic traffic model. First-order models, most notably the cell transmission model \cite{daganzo1994cell,daganzo1995cell} derived originally as a discretization of the kinematic wave model \cite{lighthill1955kinematic,richards1956shock}, and second order models such as METANET \cite{messner1990metanet,kotsialos2002traffic} are often used to model traffic dynamics. While potentially inferior in terms of accuracy, first-order models have the advantage that in certain cases, convex optimization problems are obtained, which can be efficiently solved to global optimality. In particular, a linear programming formulation is obtained if a triangular or trapezoidal fundamental diagram is relaxed \cite{ziliaskopoulos2000linear}. Solutions of the relaxed problem do not necessarily satisfy the non-relaxed equations describing the traffic dynamics, but subsequent work has identified special cases in which such a convex relaxation is tight. In particular, this is the case for the freeway ramp metering problem where only onramp and off-ramp junctions are present \cite{gomes2006optimal}, a consequence of monotonicity of the underlying system model \cite{gomes2008behavior}. This result has been generalized to networks with FIFO-diverging junctions and flow-control for all merging junctions, first for the special case of a symmetric, triangular fundamental diagram (i.e.,\ for free-flow velocity equal to congestion-wave speed) \cite{como2016convexity} and later for general, concave, monotone demand and supply functions \cite{schmitt2017exact}. The latter result relies on a monotone reformulation of the system dynamics. Even though it is known that the dynamics of FIFO-diverging junctions are \emph{not} monotone if expressed in the densities \cite{munoz2002bottleneck,coogan2015compartmental}, one can perform a state transformation to obtain a monotone model \cite{schmitt2018monotonicity}. Flow control (or priority control) for merging junctions is necessary to retain monotonicity in the transformed system.

However, a problem for all model- and optimization based control policies is the inherent uncertainty in traffic models. In particular, the models rely on predictions of future traffic demands. Uncertainty in traffic demand predictions is as a major challenge \cite{peeta2001foundations}. In addition, real-world measurements show significant variance in the observed fundamental diagram, in particular during times of congestion, see e.g.\ \cite{dervisoglu2009automatic,kurzhanskiy2010active,de2015grenoble} for recent real-word case studies. The variance in cell capacity, that is, in the maximal equilibrium flow, is particularly impactful \cite{muralidharan2011probabilistic}. While many of the proposed traffic control policies rely purely on feedback to mitigate the effects of uncertainty, but use certainty-equivalent models for prediction and optimization, some recent work considers the effects of uncertainty explicitly. For routing problems, another subclass of DTA, it has been shown that monotone routing policies show favorable resilience to capacity reductions \cite{como2013robust1,como2013robust2} and that such policies can be used to stabilize maximal-throughput equilibria \cite{como2015throughput}. However in the FNC problem (which is the focus of this paper), the dynamics of FIFO-divering junctions pose a challenge. An approach for traffic forecasting that is robust for set-valued uncertainty realizations of external traffic demand and fundamental diagram has been proposed \cite{kurzhanskiy2012guaranteed}. The method relies on monotonicity to keep the problem tractable, however, the non-monotone behavior of FIFO-diverging junctions necessitates the use of over-approximations of the uncertainty sets for the predicted state. This means that the bounds on predicted states can become prohibitively conservative quickly, in particular if the initial state is close to the critical density, the density at the boundary between free-flow and congested states. If one makes the additional assumption that a control policy can always be found, which keeps all roads in free flow, for all possible external traffic demands, then the traffic network dynamics (including FIFO-diverging junctions) are monotone. In turn, one can certify that a policy that keeps the network in free-flow for the worst-case (pointwise-maximal) external demand achieves the same for any other (smaller) external demand pattern \cite{sadraddini2016safety}. However, being able to always keep the traffic network in free-flow is a very strong assumption, which is typically not satisfied for those traffic network that one seeks to control actively. The robustness analysis provided by \cite[Propositions 3 and 4]{como2016convexity} relies on a similar idea, in particular, it does not extend to networks where congestion propagates upstream and blocks upstream diverging junctions. Unfortunately, the non-monotone effects of a congested FIFO-diverging junction are exactly what should be targeted in the FNC problem, in order to realize improvements over the uncontrolled case \cite{coogan2016stability}.

In this work, we are interested in the robust FNC problem, where we explicitly consider uncertainty in future, external traffic demand and in the fundamental diagram in the optimization. In particular, we consider a compartmental model reminiscent of the CTM, with FIFO-diverging junctions, and assume that all flows into merging junctions are controlled. We assume that uncertainty sets of a suitable shape are known and we seek to optimize the worst-case system performance in terms of TTS. A central insight of this work is that for such traffic networks, the problem of optimal control in the presence of uncertainty turns out to be simpler than predicting tight bounds on the evolution of the uncontrolled system. The reason for this seemingly counterintuitive observation is that the worst-case uncertainty realization is hard to identify for the uncontrolled traffic network, in general. For example, an increase in the external traffic demand may lead to an improvement of the overall TTS. However, we show that the same is not true if control of merging flows is available. In this case, the worst-case uncertainty realization can be identified and the optimization becomes tractable. This result is based on monotonicity of a reformulation of the system dynamics, obtained via a state-transformation. In particular, FIFO-diverging junctions are monotone in the transformed dynamics. We also analyze how the optimal feedback policy can be approximated efficiently using receding horizon policies, while retaining robustness guarantees.

The paper is structured as follows: First, we introduce the deterministic traffic network model in Section \ref{sec:problem}. The uncertainty sets are introduced in Section \ref{sec:robust} and subsequently, we define the main problem, the robust FNC problem (\ref{sec:problem_statement}), introduce a monotone reformulation of the system dynamics (\ref{sec:monotone}) and use this reformulation to solve the robust FNC problem (\ref{sec:main_result}). In Section \ref{sec:discussion}, we present examples demonstrating that (partial) control of merging flows is necessary (\ref{sec:counterexamples}), and consider extensions to partially controlled, onramp-merging junctions (\ref{sec:ramp_metering}) and receding-horizon control (\ref{sec:receding_horizon}). The theoretical results are verified numerically in Section \ref{sec:numerical}. We summarize the results in Section \ref{sec:conclusions} and provide suggestions for future work.

\section{Model description} \label{sec:problem} 

We employ a first-order, compartmental model for the traffic network. The model is defined on a directed graph $\G = (\V, \E)$. Edges $e  \in \E \subset \V \times \V$ model parts of the road called \emph{cells}, while vertices $v \in \V$ model junctions. The head of an edge is denoted $\sigma_e$ and the tail $\tau_e$. Traffic flows from tail $\tau_e$ to head $\sigma_e$. We introduce the set of \emph{merging junctions} $\mathcal{M} := \{ v : \deg^-(v) > 1 \} \subset \V$ with $\deg^-(v)$ the in-degree of vertex $v$ and the set of \emph{diverging junctions} $\mathcal{D} := \{ v : \deg^+(v) > 1 \} \subset \V$ with $\deg^+(v)$ the out-degree of vertex $v$. A vertex with $\deg^+(v) = 0$ is called a \emph{sink}. In addition, we define the set of source cells $\mathcal{S} := \{ e : \deg^-( \tau_e ) = 0 \}$. The state of the road network is described by the traffic \emph{density} $\rho_e(t)$ in each cell. We consider a discrete-time model and denote the flow within a time interval of length $\Delta t$ out of cell $e$ as $\phi_e(t)$. For any two edges $e$ and $i$ that are adjacent in the sense that $\tau_e = \sigma_i$, we define split ratios $\beta_{e,i} > 0$ such that $\sum_{i \in \E} \beta_{i,e} \leq 1$. We allow for $\sum_{i \in \E} \beta_{i,e} < 1$ and assume that the remaining percentage of traffic has left the network, for example via an offramp. For ease of notation, we also define $\beta_{e,i} := 0$ for non-adjacent cells ($\tau_e \neq \sigma_i$) and introduce the \emph{routing matrix} $R$ with $R_{e,i} := \beta_{e,i}$. In addition to internal traffic flows, external traffic demand $w_e(t)$ enters the network exclusively via the source cells, that is, $w_e(t) = 0$ for $e \notin \mathcal{S}$.
\begin{assumption}
\label{assumption:graph}
The directed network graph $\G$ does not contain self loops, that is, edges of the form $e = (v,v)$. In addition, $\deg^+(v) = 1$ for all $v \in \M$, that is, merging and diverging junctions are distinct and merging junctions are not sinks. Furthermore, for every edge $j$, there exists a directed path (along edges with nonzero turning ratios) to some edge $e$ with $\sum_{i \in \E} \beta_{i,e} < 1$
\end{assumption} 
\begin{figure} 
	\centering
	\begin{subfigure}[b]{0.45\textwidth}
		\includegraphics[width=\textwidth]{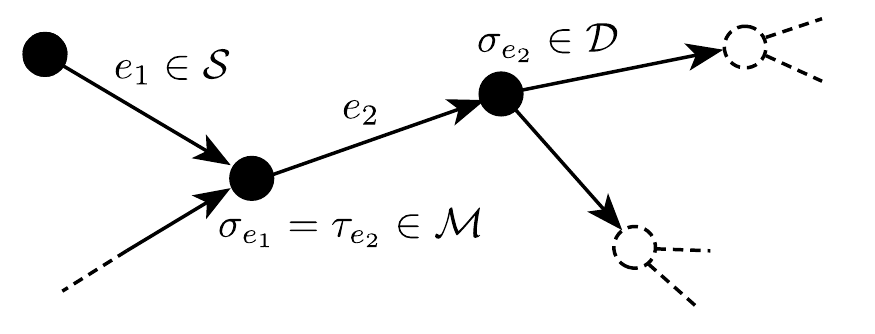}
		\caption{Notation for the network graph. Note that traffic flows from tail $\tau_e$ to head $\sigma_e$.}
		\label{fig:graph_notation}
	\end{subfigure} ~ 
	\begin{subfigure}[b]{0.25\textwidth}
		\includegraphics[width=\textwidth]{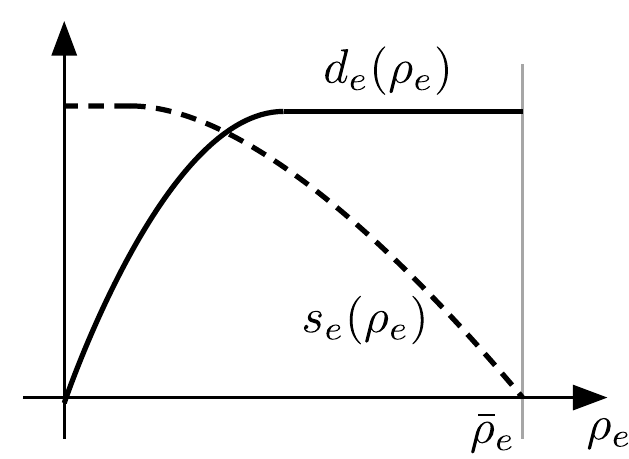}
		\caption{Demand and supply function satisfying Assumption \ref{assumption:fd} for suitable $\Delta t$.}	
		\label{fig:fd_concave}
	\end{subfigure} 
	\caption{The conservation law of the compartmental model is based on the structure of the network graph, while the uncontrolled flows are determined by traffic demand and supply.}
\label{fig:fd}
\end{figure} 
The latter condition implies that as long as traffic keeps moving, all traffic eventually leaves the network \cite{varaiya2013max,coogan2016stability}. In particular, it implies that the spectral radius of the routing matrix is strictly less than one. 
The evolution of the traffic densities $\rho_e(t)$ is described by the conservation law
\begin{equation}
\label{eq:ctm_densities}
\rho_e(t+1) = \rho_e(t) + \frac{\Delta t}{l_e} \cdot \left( \sum_{i \in \E} \beta_{e,i} \phi_i(t) ~ - \phi_e(t) + w_e(t) \right) \quad \forall e \in \E .
\end{equation}
To complete the model, we need to define the flows as a function of the densities. In the CTM, traffic \emph{demand} $\de$ and \emph{supply} $\se$ are introduced for each cell, to model the amount of cars that seek to travel downstream or the amount of free space in a cell, respectively. In the following, we assume that:
\begin{assumption} 
\label{assumption:fd}
For every cell $e$, we introduce a traffic jam density $\bar\rho_e$. The demand function $d_e( \rho_e(t) )$, $d_e: [0,\bar \rho_e] \rightarrow \mathbb{R}_+$ is concave, Lipschitz-continuous with Lipschitz constant $\gamma$, nondecreasing and it satisfies $d_e(0) = 0$. Conversely, the supply function $s_e( \rho_e(t) )$, $s_e: [0,\bar \rho_e] \rightarrow \mathbb{R}_+$ is concave, Lipschitz-continuous with Lipschitz constant $\gamma$, nonincreasing and it satisfies $s_e(\bar \rho_e) = 0$. Furthermore, the sampling time $\Delta t$ is chosen such that it satisfies the bounds 
\begin{align}
\label{eq:lipschitz}
\Delta t \leq \frac{l_e}{ \gamma } \quad \forall e \in \E .
\end{align}
We allow for cells with infinite capacity, that is, cells with both infinite traffic jam densities $\bar\rho_e = +\infty$ and infinite supply $s_e( \rho_e(t) ) = +\infty$ for all $\rho_e(t) \in [0, +\infty)$. In such a case, the demand function $d_e( \rho_e(t) )$ has to be defined for $\rho_e(t) \in [0, +\infty)$.
\end{assumption} 
Demand and supply function together are called the fundamental diagram of a cell. In this work, we assume that merging flows are controlled, whereas non-merging flows are determined by the fundamental diagrams of the corresponding cells and by the First-In, First-Out (FIFO) rule for diverging junctions. Let $\N := \{e : \sigma_e \in \M \}$ denote the set of indices of flows into a merging junction. All other flows are given as
\begin{equation}
\label{eq:ctm_flows}
\phi_e(t) =  \min \left\{ \de, \underset{i \in \E^+(e)}{\min} \left\{ \frac{ 1}{ \beta_{e,i} }s_i \big( \rho_i(t) \big) \right\} \right\}  \quad \forall e \in \E \setminus \N ~.
\end{equation}
Different models exist for merging junctions, most notably Daganzo's priority rule \cite{daganzo1995cell} and the proportional-priority merging model \cite{kurzhanskiy2010active,coogan2016stability}. We make the additional assumption that merging junctions are controlled \cite{como2016convexity,schmitt2017exact}: for all cells $e \in \N$, the flows $\phi_e(t)$ are chosen by a control policy, subject to demand constraints $0 \leq \phi_e(t) \leq \de$ and supply constraints $\sum_{e \in \E} \beta_{e,i} \cdot \phi_e(t) \leq s_i \big( \rho_i(t) \big)$, where $i$ is the index of the unique cell immediately downstream of the merging junction. This model is consistent with both Daganzo's priority rule and the proportional-priority merging model.

Note that according to the conservation law \eqref{eq:ctm_densities}, external inflows $w_e(t)$ into source cells are not a priori constrained by the supply of free space. To ensure that the system evolution is well defined, we assume that all source cells have infinite capacity $\bar \rho_e = + \infty$ for all $e \in \mathcal{S}$. Alternative models, where surplus external demand is disregarded, have also been proposed. However, these models are not suitable for the objective of minimizing the TTS, as they create an incentive to create ``artificial", temporary congestion with the objective to prevent parts of the external demand from entering the network, see the discussion in \cite{schmitt2017exact}. However, we allow for constraints 
\begin{equation}
\label{eq:ramp_constraints}
\rho_e(t) \leq \bar s_e \quad \forall e \in \mathcal{S}
\end{equation}
to be imposed in the FNC optimization problem. Here $\bar s_e$ is a design specification for the optimization problem, limiting the maximal traffic density in source cells of the \emph{controlled} traffic network.\footnote{Restrictive choice of $\bar s_e$ can lead to infeasibility of the optimization problem \eqref{eq:fnc}, of course.} Collecting the models for the individual components, we arrive at the deterministic traffic network model.

\begin{definition} 
\label{definition:ctm}
Consider a graph $\G = (V,\E)$ satisfying Assumption \ref{assumption:graph}, where each edge is equipped with a fundamental diagram satisfying Assumption \ref{assumption:fd}. We define the \emph{CTM with controlled merging junctions} as the system with states $\rho_e(t)$ for $e \in \E$ and inputs $\phi_e(t)$ for $e \in \N$. The state evolves according to the conservation law \eqref{eq:ctm_densities} with the uncontrolled flows determined by the fundamental diagram \eqref{eq:ctm_flows}. The controlled flows are subject to the constraints
\begin{subequations}
\begin{align}
0 \leq \phi_e(t) &\leq \de && \forall e \in \N , \label{eq:ctm_demand} \\ 
\sum_{i \in \E} \beta_{i,e} \cdot \phi_i(t) &\leq \se && \forall e : \sigma_e \in \M . \label{eq:ctm_supply}
\end{align}
\label{eq:ctm_constraints}
\end{subequations}
\end{definition} 
The CTM with controlled merging junctions according to Definition \ref{definition:ctm} is a special case of the model studied in \cite{schmitt2017exact}, which considers additional variants for merging junctions. In this model, the set $\mathbb{P} := \prod_{e \in \E} [0, \bar\rho_{e}]$ , with $\bar\rho_e = + \infty$ for cells of infinite capacity, is invariant for every feasible input $\phi_e(t)$ ($e \in \N$), and for every $\rho(t) \in \mathbb{P}$, there exists a feasible input. Hence, the system evolution is well defined and we can proceed to the task of optimal control of such a network. A natural objective in traffic control is to minimize the TTS on the road
\begin{align*}
\TTS = \Delta t \cdot \sum_{t=0}^{T} \sum_{e \in \E} l_e \cdot \rho_e(t)
\end{align*}
over a time horizon $\{0,1,\dots,T\}$. Whereas many control problems are defined over an infinite horizon, it is reasonable to assume that traffic networks periodically reach states of low traffic densities, at least every night. It is not advantageous to extend the optimization over such periods and therefore, the horizon of interest will always be finite. Assuming that all model parameters are known, we can formulate the deterministic FNC problem with controlled merging junctions
\begin{equation} 
\label{eq:fnc}
\begin{array}{rrl}
& \underset{\phi(t), \rho(t)}{\text{minimize}} & \TTS  \\ [2ex]
 & \text{subject to} & \text{CTM dynamics \eqref{eq:ctm_densities}, \eqref{eq:ctm_flows} and constraints \eqref{eq:ramp_constraints}, \eqref{eq:ctm_constraints}} \\[0.5ex]
 & & \text{$\rho(0)$ given.}
\end{array}
\end{equation}
The efficient solution of this problem is studied in \cite{schmitt2017exact}. In this work, we extend the scope and consider the influence of model uncertainty and uncertain predictions of future, external traffic demand.

\section{Robust Counterpart} \label{sec:robust} 

In this work, we are interested in the optimal control of traffic networks with controlled merging flows in the presence of uncertainty. In particular, we consider uncertainty in future traffic demand and uncertainty in the fundamental diagram. Future traffic demands are typically predicted using historical data. The system dynamics, that is, the fundamental diagram, can also be estimated from measurements. While measurements of free-flow speed typically show little variance, data collected during times of congestion are often scattered and cannot be well approximated by a one-dimensional fundamental diagram \cite{dervisoglu2009automatic,kurzhanskiy2010active}, as depicted in Figure \ref{fig:fd_uncertain_data}. The variance in cell capacity, that is, in the maximal equilibrium flow, is particularly impactful \cite{muralidharan2011probabilistic}, which has been emphasized as an argument to prefer feedback control based on density measurements \cite{papageorgiou1991alinea}, instead of feedforward-planning based on balancing flows and (uncertain) cell capacities. 

In addition, turning rates and density estimates are uncertain in practice. In this work, we assume that turning rates are known and constant. This is a concession to the FNC problem, where turning rates encode route choices. The FNC problem only allows for flow control, but not for active traffic routing. Time-varying (or uncertain) turning rates could be used to re-route traffic, by using flow control to release traffic at times of ``favorable" turning rates \cite{como2016convexity,schmitt2017exact}. Uncertainty in the densities, that is, in the system state, leads to a combined problem of state estimation and optimal control, for a nonlinear system for which no separation principle holds. Such a problem is out of scope of this work, and we focus on the simpler problem with uncertain dynamics, but perfect state knowledge.

\begin{figure} 
	\centering
	\begin{subfigure}[b]{0.3\textwidth}
		\includegraphics[width = \textwidth]{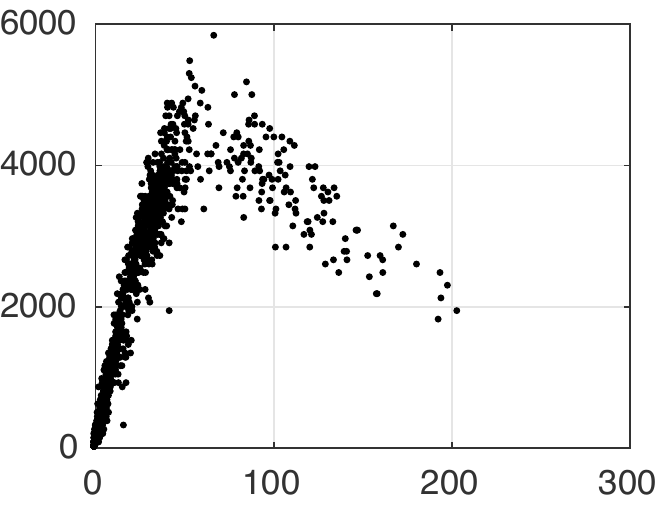}
		\caption{Traffic data from the Rocade Sud, April 14$^{\text{th}}$, 2014.}
		\label{fig:fd_uncertain_data}
	\end{subfigure} ~ 
		\begin{subfigure}[b]{0.3\textwidth}
		\includegraphics[width = \textwidth]{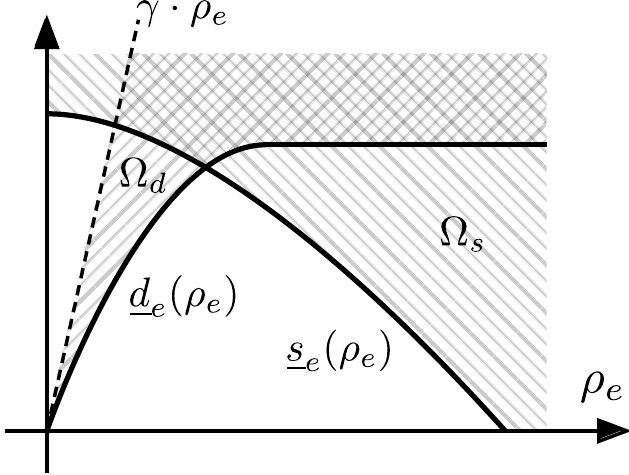}
		\caption{Uncertainty sets for of demand and supply functions.}
		\label{fig:fd_uncertain}
	\end{subfigure} ~ 
		\begin{subfigure}[b]{0.3\textwidth}
		\includegraphics[width = \textwidth]{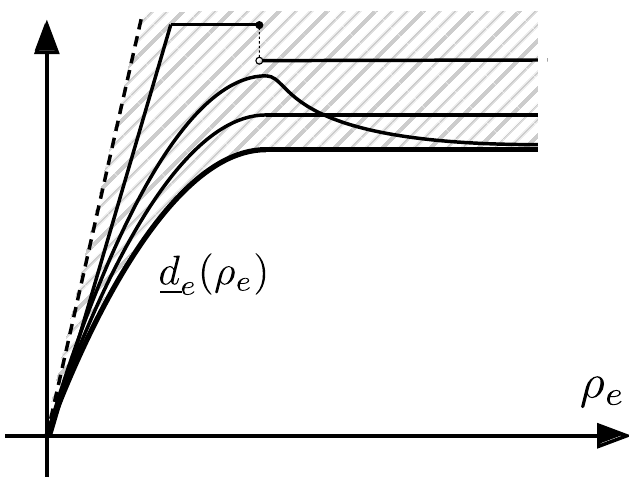}
		\caption{Examples of demand functions contained in the uncertainty set.}
		\label{fig:fd_uncertain_ex}
	\end{subfigure} ~ 
\caption{Real traffic data, such as the data from the Rocade Sud, a freeway around Grenoble, France \cite{de2015grenoble}, show significant variance in the fundamental diagram, in particular in the congested region. Note that only the minimal demand function $\underline d_e( \cdot )$ has to satisfy Assumption \ref{assumption:fd}. Hence, also demand functions with capacity drop (a decrease in demand above a critical density \cite{kontorinaki2017first}) are contained in the uncertainty set $\D$.}
	\label{fig:uncertainty}
\end{figure} 
Following the robust control paradigm, we do not assume knowledge of a probability distribution of the uncertain quantities, but introduce uncertainty sets instead. In particular, we assume that the realizations of the external demand $w_e(t)$ are nonnegative and point-wise upper bounded, $0 \leq w_e(t) \leq \overline w_e(t)$. The corresponding uncertainty sets are defined as $\Omega_{w,t,e} := \{ w_e(t) : 0 \leq w_e(t) \leq \overline w_e(t) \}$. We also use $\Omega_{w,t}$ (respectively $\Omega_{w}$) to denote the joint uncertainty set for all $e \in \E$ (and all $t \in \{0,t,\dots,T\}$).

Similarly, we assume that \emph{lower} bounds on demand $\underline d_e ( \cdot )$ and supply functions $\underline s_e ( \cdot )$ are known for each cell. These minimal demand $\underline d_e ( \cdot )$ and supply functions $\underline s_e ( \cdot )$ are assumed to satisfy Assumption \ref{assumption:fd}, for every $e \in \E$. The real demand $d_{t,e} : \R_+ \to \R_+$ and supply functions $s_{t,e} : \R_+ \to \R_+$ are unknown a priori, and in particular, they do not necessarily satisfy Assumption \ref{assumption:fd}. Here, $\R_+ = [0, +\infty)$ denotes the non-negative numbers and the subscript $t$ indicates that realizations of demand and supply functions may be time-varying.\footnote{In principle, the minimal demand and supply functions can also be time-varying, which might be useful to describe predictable changes in environmental conditions, e.g.\ due to weather conditions. However, we assume time-invariant bounds for ease of exposition.} However, we do assume that they are point-wise lower bounded, that is, $d_{t,e}( \rho ) \geq \underline d_e ( \rho )$, for all $\rho \in 0 \leq \bar \rho_e$, and $d_{t,e}( \rho ) \geq 0$ for $\rho \geq \bar \rho_e$. Likewise $s_{t,e}( \rho ) \geq \underline s_e ( \rho )$, for all $\rho \in [0, \bar \rho_e]$, and $s_{t,e}( \rho ) \geq 0$ for $\rho \geq \bar \rho_e$. In addition, we assume that $d_{t,e}( \rho ) \leq \gamma \cdot \rho$, where $\gamma$ is the Lipschitz constant from Assumption \ref{assumption:fd}. The latter condition ensures that $\Delta t \cdot \phi_e(t) \leq l_e \rho_e(t)$ and hence, that the set $\R_+^n$ (with $n$ the number of cells) is invariant. Note that since the uncertain demand and supply functions are defined for all $\rho_e(t) \in \R_+$, the system evolution remains well defined. However, the densities $\rho_e(t)$ encountered when simulating the uncertain system may exceed $\bar \rho_e$, the traffic jam density of the minimal supply function $\underline s_e(\cdot)$. We define uncertainty sets for demand and supply functions
\begin{align*}
\Omega_{d,e} &:= \{ d_{t,e}: \R_+ \to \R_+ : \underline d_e ( \rho ) \leq d_{t,e}( \rho ),  ~\forall \rho \in [0, \bar \rho_e],~ d_{t,e}( \rho ) \leq \gamma \cdot \rho \}, \\
\Omega_{s,e} &:= \{ s_{t,e}: \R_+ \to \R_+: \underline s_e( \rho ) \leq s_{t,e}( \rho ) , ~\forall \rho \in [0, \bar \rho_e] \}.
\end{align*}
Again, $\Omega_{d}$ and $\Omega_{s}$ denote the joint uncertainty sets for all $e \in \E$. In addition, we introduce the joint uncertainty set $\Omega = \W \times \D^T \times \Su^T$ for ease of notation. Elements $\omega \in \Omega$ are tuples containing $w_e(t)$, $d_{t,e}(\cdot)$ and $s_{t,e}(\cdot)$ for all $e \in \E$ and $t \in \{0,t,\dots,T\}$. In particular, we define $\overline \omega$ as the tuple containing the maximal external demand $\overline w_e(t)$ and the \emph{minimal} demand and supply functions $\underline d_{e}(\cdot)$ and $\underline s_{e}(\cdot)$ for all $e \in \E$ and all $t \in \{0,t,\dots,T\}$. Examples of uncertainty sets for demand and supply functions are depicted in Figure \ref{fig:uncertainty}.
\begin{assumption} 
\label{assumption:uncertainty}
The uncertainty sets for external demand $\W$, demand functions $\D$ and supply functions $\Su$ are of the form described in the previous paragraph and they are known a priori.
\end{assumption} 

\begin{remark}
Depending on the application, it might be desirable to define uncertainty sets such that a ``maximal" traffic jam density $\bar{\bar \rho}_e$, which is larger or equal to the ``minimal" traffic jam density $\bar \rho_e$ of the lower bound on the supply function, is not exceeded. This can be achieved by restricting the uncertainty set for the supply functions further, in particular, by defining $\Omega_{s,e} := \{ s_{t,e}: \R_+ \to \R_+: \underline s_{e}( \rho_e ) \leq s_{t,e}( \rho_e ) ~\forall \rho_e \in [0, \bar \rho_e],~ s_{t,e}(\rho_e) \leq \gamma (\bar{\bar \rho}_e - \rho_e) , ~\forall \rho \in [0, \bar {\bar \rho}_e] \}$. The theoretical results of this work do not depend on the existence or non-existence of such an upper bound on the supply functions. 
\end{remark}

A key insight of this work is that the problem of optimal control of uncertain networks with controlled merging flows turns out to be simpler than the problem of predicting tight bounds on the evolution of the uncontrolled system. The reason for this seemingly counterintuitive observation is that the worst-case uncertainty realization is hard to identify for the uncontrolled system, in general. One can construct examples where, for example, an increase in the external traffic demands leads to an overall reduction in TTS (see Example \ref{example:ex1}, Section \ref{sec:counterexamples}). However, the same is not true if control of merging flows is available, and in the remainder of this section, we seek to demonstrate that in such a case, the worst-case uncertainty realization is easy to identity, making the robust, optimal control problem tractable.

\subsection{Problem statement} \label{sec:problem_statement} 

In this section, we define the robust counterpart to the FNC problem formally. In control, it is typical that uncertainty realizations are revealed stage wise, i.e., in the uncertain FNC problem, the realizations of $d_{t,e}(\cdot)$, $s_{t,e}(\cdot)$ and $w_e(t)$ are revealed at time $t$ (or can be computed from the states observed at $t+1$). The robust control problem aims to mitigate the effects of future uncertainty, while taking past realizations into account. In general, such a control problem requires the optimization over policies, i.e.\ functions $\pi_t: \X \to \U$, where $\X$ is the state space and $\U$ is the input space, mapping states to control inputs. Note that since the state at time $t$ depends on earlier uncertainty realizations, control actions at time $t$ may implicitly depend on uncertainty realizations at earlier time steps. Here, we consider problems over a finite horizon $t \in \{ 0,1,\dots,T \} $ subject to time-varying external disturbances. Therefore, we need to consider time-varying policies as well, which do not only depend on the traffic network state $\rho(t)$, but also on the time $t$. Recall that the control actions in the FNC problem, that is, the controlled flows, are subject to joint state-input constraints. Therefore, we define the set of \emph{feasible} control policies 
\begin{align*}
\Pi_t := \left\{\pi_t : \R_+^{|E|} \to \R_+^{|\N |} : \begin{array}{l} \text{ For all $e \in \N$ and } \phi_e(t) = \pi_{t,e} (\rho(t)) \text{, the pair $ \big( \rho(t), \phi_e(t) \big)$ satisfies} \\[0.5ex] \text{the constraints \eqref{eq:ramp_constraints} and \eqref{eq:ctm_constraints}, for any uncertainty realization $\omega_t \in \Omega_t$.} \end{array} \right\} .
\end{align*}
The joint set $\Pi$ contains the sets of feasible policies $\Pi_t$ for all $t \in \{0,1,\dots,T\}$. It is not a priori clear that this set is non-empty, but in later sections, we will provide conditions which ensure just that. For any particular uncertainty realization $\omega \in \Omega$ and any particular choice of a feasible policy $\pi \in \Pi$, the evolution of the traffic network is well-defined. We denote the resulting cost over the horizon $t \in \{0,1,\dots,T\}$ as $\TTS( \pi, \omega)$. The \emph{robust counterpart} to the FNC problem, or simply the \emph{robust FNC} problem, is defined as the optimization problem
\begin{equation} 
\label{eq:robust_counterpart}
\C^* = \min_{\pi \in \Pi} \left( \max_{\omega \in \Omega} ~ \TTS( \pi, \omega) \right)
\end{equation}
with optimal value (cost) $\C^*$. The robust FNC problem in this form is a bi-level, infinite dimensional and non-convex optimization problem. We aim to solve the robust FNC problem, in the sense that we will provide a finite-dimensional, convex reformulation. The result is based on monotonicity arguments and in the following, we will define and analyze a reformulation of the network model, obtained via a state-transformation, which exhibits favorable monotonicity properties.

\subsection{Monotone reformulation} \label{sec:monotone} 

\begin{figure} 
	\centering
	\begin{subfigure}[b]{0.43\textwidth}
		\includegraphics[width = \textwidth]{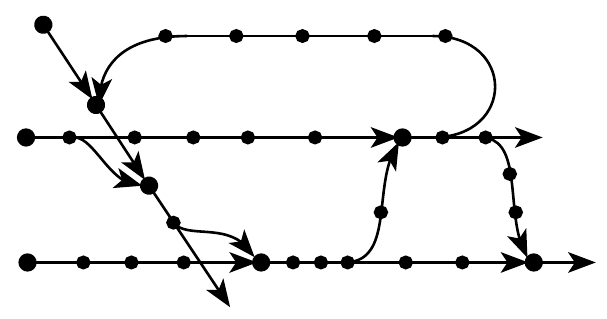}
		\caption{Example network graph, corresponding to a routing matrix $R$ with merging junctions.}
		\label{fig:graph_P}
	\end{subfigure} ~ 
	\begin{subfigure}[b]{0.49\textwidth}
		\includegraphics[width = \textwidth]{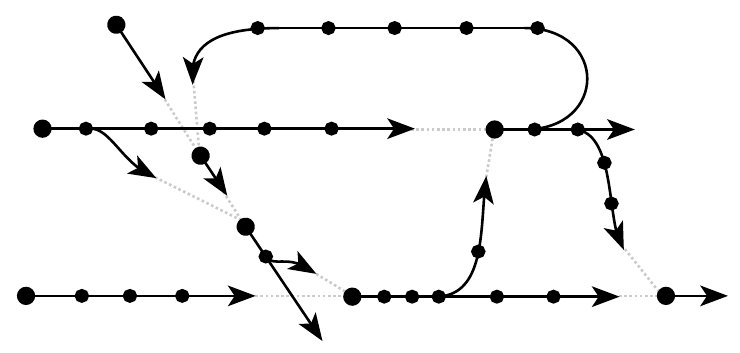}
		\caption{Reduced network graph, corresponding to the reduced routing matrix $\underline{R}$, without merging junctions.}
		\label{fig:graph_Pz}
	\end{subfigure} 
\caption{The state transformation is based on the routing matrix of the reduced network graph, which results from removing merging junctions from the original network graph. A network defined on the graph depicted here is studied in detail in the numerical evaluation in Section \ref{sec:numerical}.
 }
	\label{fig:graph}
\end{figure}

Consider the reduced routing matrix $\rR$, which is defined as $\rR_{i,j} = 0$ if $j \in \N$ and $\rR_{i,j} = R_{i,j} = \beta_{i,j}$ otherwise. The reduced routing matrix describes a graph resembling the original network graph defined by $R$, but where all merging edges are instead converted into sinks and cells downstream of merging junctions are now sources, as depicted in Figure \ref{fig:graph}. Note that the reduced network graph is a forest, that is, a graph whose connected components are (directed) trees. 
Directed trees satisfy Assumption \ref{assumption:graph} and hence, the spectral radius of the reduced routing matrix is also strictly less than one. Therefore, $( \I - \rR)$ is regular and the inverse $( \I - \rR)^{-1}$ exists. In addition, the inverse is nonnegative, since $\rR \geq 0$ and $( \I - \rR )^{-1} = \sum_{k=0}^{\infty} \rR^k \geq 0$. For ease of notation, we define $P := ( \I - \rR)^{-1}$ and $p_e^\top := P_{(e,:)}$ as the row of $P$ corresponding to cell $e$. Based on the reduced routing matrix, we consider the state transformation 
\begin{equation}
\label{eq:transformation}
z(t) := P L \rho(t).
\end{equation}
Within the reduced network, the state $z_e(t)$ can be interpreted as the \emph{backlog} of traffic, that is, the total traffic volume currently in the (reduced) network, that will pass through cell $e$ in the future \cite{schmitt2018monotonicity}. 
In addition, we perform a re-parametrization of the system inputs and introduce generic new inputs $v_e(t)$ for all $e \in \N$. The corresponding, controlled flows are chosen as $\phi_e(t) := \max \left\{ 0, \frac{z_e(t) - v_e(t)}{\Delta t} \right\}$. Note that the non-negativity constraints for flows are implicitly satisfied, but any value $\phi_e(t) \geq 0$ can be obtained for an appropriate choice of $v_e(t)$. 
\begin{lemma} \label{lemma:equivalence_tctm} 
The CTM with controlled merging junctions can equivalently be described by the system with state $z(t)$ and input $v_e(t)$ for $e \in \N_S$, that evolves as
\begin{align}
\label{eq:tctm_dynamics}
z(t+1) = f \big( z(t), v(t), \omega_t \big) := z(t) - \Delta t \cdot \phi(t) + \Delta t \cdot P \big( w(t) + ( R - \rR ) \cdot \phi(t) \big) ,
\end{align}
for uncertainty realization $\omega_t$. The flows are computed as 
\begin{align*}
\phi_e(t) &= \min \left\{ d_{t,e} \big( \rho_e(z(t)) \big), \underset{i \in \E^+(e)}{\min} \left\{ \frac{ s_{t,i}( \rho_i( z(t) ) )}{ \beta_{e,i} } \right\} \right\} & \forall e \notin \N \\[0.5ex]
\phi_e(t) &= \max \left\{ 0, \frac{z_e(t) - v_e(t)}{\Delta t} \right\} & \forall e \in \N .
\end{align*}
In the latter equations, $\rho_e ( z(t) )$ means that $\rho_e(t)$ is computed from the transformed state $z(t)$ via the inverse transformation $\rho(t) = L^{-1} (\I - \rR) z(t)$. We call the transformed system the \emph{transformed cell transmission model} (TCTM). The system is defined on the state-space $\Z := \{ z(t) : L^{-1} (\I - \rR) z(t) \in \mathbb{P} \}$, for inputs $v_e(t)$, $e \in \N$, subject to the constraints
\begin{subequations}
\label{eq:tctm_constraints}
\begin{align}
z_e(t) - v_e(t) - \Delta t \cdot d_{t,e}\big( \rho_e(z(t)) \big) &\leq 0 & \forall e : \sigma_e \in \M, \\[0.5ex]
\sum_{i \in \E} \beta_{e,i} \cdot \max \bigg\{ 0, \frac{z_i(t) - v_i(t)}{\Delta t} \bigg\} - s_{t,e} \big( \rho_e( z(t) ) \big) &\leq 0 & \forall e : \tau_e \in \M, \\[-0.5ex]
z_e(t) &\leq \bar s_e &\forall e \in \mathcal{S}.
\end{align}
\end{subequations}
We introduce the short-hand notation $g \big( z(t), v(t), \omega_t \big) \leq 0$ (where the inequality is interpreted element-wise) for all of these constraints.
\end{lemma}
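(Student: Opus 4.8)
The plan is to establish the equivalence by direct substitution of the state transformation \eqref{eq:transformation} into the original conservation law \eqref{eq:ctm_densities}, and then to verify that the constraint sets match. First I would rewrite the CTM dynamics \eqref{eq:ctm_densities} in vector form as $L\rho(t+1) = L\rho(t) + \Delta t \cdot \big( R\phi(t) - \phi(t) + w(t) \big)$, where $L$ is the diagonal matrix of cell lengths $l_e$. Multiplying on the left by $P = (\I - \rR)^{-1}$ and using $z(t) = PL\rho(t)$ gives $z(t+1) = z(t) + \Delta t \cdot P\big( R\phi(t) - \phi(t) + w(t) \big)$. The key algebraic manipulation is to split $R = \rR + (R - \rR)$ and observe that $P(\rR - \I) = (\I - \rR)^{-1}(\rR - \I) = -\I$, so that $P(R - \I)\phi(t) = -\phi(t) + P(R - \rR)\phi(t)$. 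Substituting this yields exactly \eqref{eq:tctm_dynamics}. This part is routine linear algebra; the only subtlety is keeping track of which cells are sources, sinks, and merging cells, but nothing conceptually hard occurs here.

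Next I would verify the flow equations. For $e \notin \N$, the uncontrolled flow \eqref{eq:ctm_flows} is expressed in densities; since the inverse transformation $\rho(t) = L^{-1}(\I - \rR)z(t)$ is well-defined on $\Z$ (here I use that $\I - \rR$ is regular, as established before the statement), the substitution $\rho_e \mapsto \rho_e(z(t))$ is immediate, and we additionally replace the nominal demand/supply functions by their uncertain realizations $d_{t,e}, s_{t,i}$ consistent with the uncertainty realization $\omega_t$. For $e \in \N$, I need to check that the re-parametrization $\phi_e(t) = \max\{0, (z_e(t) - v_e(t))/\Delta t\}$ is a genuine bijection onto the set of admissible controlled flows: any $\phi_e(t) \geq 0$ is attained by choosing $v_e(t) = z_e(t) - \Delta t\,\phi_e(t)$, and conversely every choice of $v_e(t)$ produces a nonnegative flow. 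I would note this is exactly the reparametrization mentioned in the paragraph preceding the lemma.

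Then I would check the constraints. The demand constraint \eqref{eq:ctm_demand}, $\phi_e(t) \leq d_{t,e}(\rho_e)$ for $e \in \N$ (i.e.\ $\sigma_e \in \M$), becomes, after substituting $\phi_e(t) = \max\{0, (z_e(t) - v_e(t))/\Delta t\}$ and multiplying by $\Delta t$, the inequality $z_e(t) - v_e(t) - \Delta t\, d_{t,e}(\rho_e(z(t))) \leq 0$, which is the first line of \eqref{eq:tctm_constraints} — here one uses that if $z_e(t) - v_e(t) \leq 0$ the constraint holds trivially since $d_{t,e} \geq 0$, and otherwise the $\max$ is inactive. The supply constraint \eqref{eq:ctm_supply}, $\sum_{i} \beta_{i,e}\phi_i(t) \leq s_{t,e}(\rho_e)$ for cells $e$ with $\tau_e \in \M$ (the unique cell downstream of a merge), translates directly into the second line after substituting the reparametrized flows for the incoming merging edges $i \in \N$. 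The ramp constraint \eqref{eq:ramp_constraints}, $\rho_e(t) \leq \bar s_e$ for $e \in \mathcal{S}$, becomes $z_e(t) \leq \bar s_e$; this uses that for a source cell $e$, the row $p_e^\top$ of $P$ and the structure of $\rR$ give $z_e(t) = l_e \rho_e(t)$ up to the length normalization — more precisely, since source cells have no predecessors in the reduced graph, $z_e(t) = l_e\rho_e(t)$, so one should absorb $l_e$ into the definition of $\bar s_e$ or note it is a design parameter, consistent with how $\bar s_e$ is introduced in \eqref{eq:ramp_constraints}.

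The main obstacle, such as it is, is bookkeeping rather than depth: one must be careful that $\N = \{e : \sigma_e \in \M\}$ indexes the \emph{merging} edges while the downstream supply constraint is indexed by the distinct cell with $\tau_e \in \M$ (which is well-defined and unique by Assumption \ref{assumption:graph}, since $\deg^+(v) = 1$ for $v \in \M$), and that the reduced graph is a forest so that $P = \sum_{k=0}^\infty \rR^k$ converges and is nonnegative — this ensures $\Z$ is a genuine reparametrization of $\mathbb{P}$ and that the inverse map $\rho(t) = L^{-1}(\I - \rR)z(t)$ lands in the right set. Finally I would remark that invariance of $\Z$ and well-posedness of the flows carry over from the corresponding properties of the CTM stated after Definition \ref{definition:ctm}, since the transformation is a linear bijection. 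No genuinely hard estimate is required; the content of the lemma is that the transformation is exact, and the proof is a verification.
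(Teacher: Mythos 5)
Your proof is correct and follows essentially the same route as the paper's: left-multiply the vectorized conservation law by $P=(\I-\rR)^{-1}$, use $P(\rR-\I)=-\I$ to split off the $-\Delta t\,\phi(t)$ term, and then verify the flow equations and constraints by direct substitution of the inverse transformation and the input reparametrization. Your remark that $z_e(t)=l_e\rho_e(t)$ (rather than $z_e(t)=\rho_e(t)$) for source cells, so that the bound $\bar s_e$ must absorb the length $l_e$, is in fact slightly more careful than the paper's own wording of that step.
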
 
\begin{proof} 
The transformation \eqref{eq:transformation} is invertible, and we can verify that applying the transformation yields the equations of the TCTM. The evolution of the transformed system is given as
\begin{align*}
z(t+1) &= (\I - \rR)^{-1} L \cdot \Big( \rho(t) + \Delta t \cdot L^{-1} \big( (R - \I) \phi(t) + w(t) \big) \Big) \\
 &= z(t) + \Delta t \cdot P \big( w(t) + ( R - \rR ) \cdot \phi(t) \big) - \Delta t \cdot \phi(t) .
\end{align*}
The flow equations for uncontrolled flows follows from substituting $\rho_e ( z(t) )$ for $\rho_e(t)$ in the flow equations \eqref{eq:ctm_flows}, while the controlled flows are chosen according to the definition of the inputs $v(t)$. The constraints of the TCTM follow from substituting the re-parametrized inputs into the original constraints \eqref{eq:ramp_constraints} and \eqref{eq:ctm_constraints}. Note that we have used the fact that $\rho_e(t) = z_e(t)$ for all source cells $e \in \mathcal{S}$.
\end{proof} 

The main motivation to introduce the TCTM is to employ monotonicity for its analysis. The standard concept of a monotone function can be generalized to dynamic systems in the following manner.
\begin{definition} \label{definition:monotone_system} 
Consider a dynamical system with state $x(t) \in \X$ and input $u(t) \in \U$, defined by its (potentially time-varying) system dynamics $f_t: \X \times \U \to \X$, i.e.\ $x(t+1) = f_t \big( x(t), u(t) \big)$, equipped with joint input-state constraints $g_t \big( x(t), u(t) \big) \leq 0$. The system is monotone (equipped with monotone constraints) if the functions $f_t( \cdot)$ (the functions $g_t(\cdot )$) are monotone in state $x(t)$ and input $u(t)$.
\end{definition}
Monotone systems with inputs are a generalization of monotone maps \cite{hirsch2005monotone}. An overview of results for monotone systems is provided in e.g.\ \cite{angeli2003monotone,hirsch2006monotone}, although for the continuous-time case. Together with convexity of system dynamics, monotonicity can be used to make certain optimal control problems tractable \cite{rantzer2014control,schmitt2017exact}.

The following technical Lemma will be helpful in analyzing monotonicity of the TCTM.
\begin{lemma} 
\label{lemma:auxiliary}
The auxiliary functions $f^d_e \big( z(t) \big) := z_e(t) - \Delta t \cdot \underline d_e \big( \rho_e(z(t)) \big)$ for all $e \in \E$, $f^s_{e,i} \big( z(t) \big) := z_i(t) - \frac{\Delta t}{\beta_{e,i}} \cdot \underline s_e \big( \rho_e(z(t)) \big)$ for all $e \in \E$, $i \notin \N$ and $\beta_{e,i} > 0$, and $f^r \big( z(t), v(t) \big)  := ( R - \rR ) \cdot \phi(t)$ are monotone in $z(t) \in \Z$.
\end{lemma}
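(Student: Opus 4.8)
The plan is to reduce each of the three claims to an elementary two‑case estimate, built on two ingredients. First, the inverse transformation $\rho(z) = L^{-1}(\I - \rR)z$, with $L = \mathrm{diag}(l_e)$, reads componentwise $\rho_e(z) = \tfrac{1}{l_e}\big(z_e - \sum_m \rR_{e,m} z_m\big)$ with $\rR \ge 0$; and on $\Z$ one has $\rho(z) \in \mathbb{P}$, so each $\rho_e(z) \in [0,\bar\rho_e]$, the interval on which $\underline d_e$ and $\underline s_e$ satisfy Assumption~\ref{assumption:fd} ($\gamma$-Lipschitz, monotone), together with $\Delta t\,\gamma \le l_e$ from \eqref{eq:lipschitz}. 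Second, a structural observation: if $i \notin \N$ and $\beta_{e,i} > 0$, then $\sigma_i = \tau_e \notin \M$, so $\tau_e$ has in-degree one with unique in-edge $i$; hence in row $e$ of $\rR$ the only off-diagonal nonzero is $\rR_{e,i} = \beta_{e,i} > 0$, and $\rho_e(z) = \tfrac{1}{l_e}(z_e - \beta_{e,i} z_i)$ depends on at most the two coordinates $z_e, z_i$. Analogously, $(R - \rR)_{e,m}$ equals $\beta_{e,m}$ when $m \in \N$ and $0$ otherwise, so $R-\rR \ge 0$ and $(R-\rR)\phi$ couples to $\phi$ only through the controlled flows.

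For $f^d_e$ I would fix $z \le z'$ in $\Z$. Since $\rR \ge 0$, $\rho_e(z') - \rho_e(z) \le \tfrac{1}{l_e}(z'_e - z_e)$. If $\rho_e(z') \ge \rho_e(z)$, then $\gamma$-Lipschitz continuity and $\Delta t\,\gamma \le l_e$ give $\Delta t\big(\underline d_e(\rho_e(z')) - \underline d_e(\rho_e(z))\big) \le \tfrac{\Delta t\,\gamma}{l_e}(z'_e - z_e) \le z'_e - z_e$; otherwise monotonicity of $\underline d_e$ makes the left side nonpositive. In both cases $f^d_e(z') - f^d_e(z) = (z'_e - z_e) - \Delta t\big(\underline d_e(\rho_e(z')) - \underline d_e(\rho_e(z))\big) \ge 0$. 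For $f^s_{e,i}$ I would use the structural observation to write $\rho_e(z) = \tfrac{1}{l_e}(z_e - \beta_{e,i} z_i)$, and again fix $z \le z'$ in $\Z$. If $\rho_e(z') \ge \rho_e(z)$, then $\underline s_e$ nonincreasing gives $-\tfrac{\Delta t}{\beta_{e,i}}\big(\underline s_e(\rho_e(z')) - \underline s_e(\rho_e(z))\big) \ge 0$, so $f^s_{e,i}(z') - f^s_{e,i}(z) \ge z'_i - z_i \ge 0$. If $\rho_e(z') < \rho_e(z)$, then $\rho_e(z) - \rho_e(z') = \tfrac{1}{l_e}\big(\beta_{e,i}(z'_i - z_i) - (z'_e - z_e)\big) \le \tfrac{\beta_{e,i}}{l_e}(z'_i - z_i)$, so Lipschitz continuity and $\Delta t\,\gamma \le l_e$ yield $\tfrac{\Delta t}{\beta_{e,i}}\big(\underline s_e(\rho_e(z')) - \underline s_e(\rho_e(z))\big) \le \tfrac{\Delta t\,\gamma}{l_e}(z'_i - z_i) \le z'_i - z_i$, whence $f^s_{e,i}(z') - f^s_{e,i}(z) \ge 0$ once more.

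For $f^r(\cdot, v)$ with $v$ fixed, the point is that $R - \rR$ vanishes on every column $m \notin \N$, so $(R-\rR)\phi(t) = \sum_{m \in \N}(R-\rR)_{\cdot,m}\,\phi_m(t)$ with $\phi_m(t) = \max\{0, (z_m - v_m)/\Delta t\}$ nondecreasing in $z$; since $R-\rR \ge 0$, $f^r(\cdot, v)$ is nondecreasing in $z$. The (non-monotone) dependence of the uncontrolled flows $\phi_e$, $e \notin \N$, on $z$ plays no role here because it is annihilated by $R - \rR$.

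I expect the $f^s_{e,i}$ case to be the only genuine obstacle. Its monotonicity fails without the hypothesis $i \notin \N$: that hypothesis is exactly what forces $\rho_e$ to depend on no coordinate other than $z_e$ and $z_i$, and with the coefficient of $z_i$ equal to $-\beta_{e,i}/l_e$, so that the prefactor $\tfrac{1}{\beta_{e,i}}$ cancels against it and the bound $\Delta t\,\gamma \le l_e$ can be invoked; the asymmetric occurrence of $z_i$ (outside $\underline s_e$) versus $z_e$ (inside $\underline s_e$) makes the sign of $\rho_e(z')-\rho_e(z)$ the natural quantity to split on. Pinning down this structure — together with the dual observation that $R - \rR$ only sees controlled flows — is the conceptual content; the remaining estimates are the routine bookkeeping sketched above.
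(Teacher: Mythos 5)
Your proof is correct and follows essentially the same route as the paper: direct componentwise verification using the inverse transformation, nonnegativity of $\rR$, the Lipschitz bound $\Delta t\,\gamma \le l_e$, monotonicity of $\underline d_e$ and $\underline s_e$, and the fact that $R-\rR$ is supported on the controlled columns. The only difference is presentational — you split on the sign of $\rho_e(z')-\rho_e(z)$ where the paper chains the monotonicity and Lipschitz estimates — and your explicit justification that $\rho_e$ depends only on $z_e$ and $z_i$ in the $f^s_{e,i}$ case makes the argument, if anything, slightly more complete.
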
 
\begin{proof} 
In the following, we drop the time index $t$ for ease of notation and introduce $\Delta z \geq 0$. To prove monotonicity of the first two auxiliary functions, we resort to the elementary definition of monotonicity. A function $f(z)$ is monotone if $f( z+\Delta z ) \geq f(z)$ for all $\Delta z \geq 0$ such that $z \in \Z$ and $z + \Delta z \in \Z$. We find that 
\begin{align*}
f^d_e \big( z + \Delta z \big) - f^d_e \big( z \big) &= \Delta z_e - \Delta t \cdot \left( \underline d_e \bigg( \frac{z_e + \Delta z_e - \sum_{i \notin \N} \beta_{e,i} z_i + \Delta z_i}{l_e} \bigg) - \underline d_e \bigg( \frac{z_e - \sum_{i \notin N} \beta_{e,i} z_i }{l_e} \bigg) \right) \\[1ex]
&\geq \Delta z_e - \Delta t \cdot \left( \underline d_e \bigg( \frac{z_e + \Delta z_e - \sum_{i \notin \N} \beta_{e,i} z_i }{l_e} \bigg) - \underline d_e \bigg( \frac{z_e - \sum_{i \notin \N} \beta_{e,i} z_i }{l_e} \bigg) \right) \\[1ex]
 &\geq \Delta z_e - \Delta t \cdot \gamma \cdot l_e^{-1} \cdot \Delta z_e \geq 0 ,
\end{align*}
which implies that the auxiliary function $f^d_e \big( \cdot )$ is monotone. Similarly, we find that 
\begin{align*}
f^s_{e,i} \big( z + \Delta z \big) - f^s_{e,i} \big( z \big) &= \Delta z_i - \frac{\Delta t}{\beta_{e,i}} \cdot \left( \underline s_e \bigg( \frac{z_e + \Delta z_e - \beta_{e,i} z_i + \Delta z_i}{l_i} \bigg) - \underline s_e \bigg( \frac{z_e - \beta_{e,i} z_i }{l_i} \bigg) \right) \\[1ex]
 &\geq \Delta z_i - \frac{\Delta t}{\beta_{e,i}} \cdot \left( \underline s_e \bigg( \frac{z_e - \beta_{e,i} z_i + \Delta z_i}{l_i} \bigg) - \underline s_e \bigg( \frac{z_e - \beta_{e,i} z_i }{l_i} \bigg) \right) \\[1ex]
 &\geq \Delta z_i - \frac{\Delta t}{\beta_{e,i}} \cdot \gamma \cdot \frac{\beta_{e,i}}{l_i} \Delta_i \geq 0 ,
\end{align*}
which implies that the auxiliary function $f^s_{e,i}( \cdot )$ is monotone as well. Finally, we consider the auxiliary function $f^r \big( z(t), v(t) \big)  = ( R - \rR ) \cdot \phi(t)$. Each component can be written as $f^r_e \big( z(t), v(t) \big) = \sum_{i \in \N_S} \beta_{e,i} \phi_i(t)$ Since the flows $\phi_i(t)$ for all $i \in \N_S$ are controlled, we have that $f^r_e \big( z(t), v(t) \big) = \sum_{i \in \N_S} \beta_{e,i} \max \big\{ 0, \frac{z_i(t) - v_i(t)}{\Delta t} \big\}$, which is clearly monotone in $z(t)$.
\end{proof} 

To analyze the influence of uncertainty, we will now fix the inputs $v(t)$ of the TCTM and interpret the external demand $w(t)$ as the new system input. Demand and supply functions are also held constant, for now. Using monotonicity of the auxiliary functions, we are ready to prove that the resulting system is monotone.

\begin{lemma} 
\label{lemma:tctm_monotonicity}
Assume $\phi_e^*(t)$ for $e \in \N$, $t \in 0, \dots T$ is a feasible input sequence in the deterministic CTM for the uncertainty realization $\omega = \overline \omega$, with corresponding state evolution $\rho^*(t)$. Consider now the TCTM with inputs chosen as $v^*(t) := \rho^*_e(t) - \Delta t \cdot \phi_e^*(t)$ and demand $d_{e,t}(\cdot) = \underline d_e(\cdot)$ and supply functions $s_{e,t}(\cdot) = \underline s_e(\cdot)$ fixed, and interpret the external demand $w(t) \in \Omega_{w,t}$ as the new system input. The resulting system is a monotone system, with monotone constraints.
\end{lemma}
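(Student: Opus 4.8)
The plan is to verify Definition \ref{definition:monotone_system} directly for the system in which the controlled inputs are frozen at $v^*(t)$, the fundamental diagrams are frozen at $\underline d_e(\cdot)$ and $\underline s_e(\cdot)$, and $w(t) \in \Omega_{w,t}$ is reinterpreted as the input: one must show that $z(t) \mapsto f\big(z(t), v^*(t), \omega_t\big)$ and every component of $z(t) \mapsto g\big(z(t), v^*(t), \omega_t\big)$ are monotone in the pair $\big(z(t), w(t)\big)$. Monotonicity in $w(t)$ is immediate, since $w(t)$ does not appear in the flow equations (for $e \notin \N$ the flow depends only on $z(t)$, and for $e \in \N$ only on $z(t)$ and $v^*(t)$), does not appear in the constraints \eqref{eq:tctm_constraints}, and enters the dynamics \eqref{eq:tctm_dynamics} solely through $\Delta t\, P w(t)$ with $P = (\I - \rR)^{-1} \geq 0$. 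Thus the whole argument reduces to monotonicity in $z(t)$, which I would establish by decomposing $f$ and $g$ into building blocks that Lemma \ref{lemma:auxiliary} already certifies as monotone in $z(t)$. The feasibility hypothesis on $\phi^*$ for $\overline\omega$ enters only to guarantee that $v^*(t)$ is well defined, that the associated TCTM constraints are consistent, and that the trajectories of interest stay in the domain on which these manipulations are performed; it is not used in the monotonicity argument proper, which holds for any fixed choice of the inputs.

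For the dynamics I would write $f\big(z, v^*, \omega\big) = \big[\,z - \Delta t\,\phi\,\big] + \Delta t\, P w + \Delta t\, P\big[(R - \rR)\phi\big]$. The middle term is constant in $z$. In the last term, $(R - \rR)\phi = f^r(z, v^*)$ is monotone in $z$ by Lemma \ref{lemma:auxiliary}, and left multiplication by the nonnegative matrix $P$ preserves componentwise monotonicity. For the first term I would argue coordinate by coordinate: for $e \in \N$, substituting $\phi_e = \max\{0, (z_e - v_e^*)/\Delta t\}$ gives $z_e - \Delta t\,\phi_e = \min\{z_e, v_e^*\}$, nondecreasing in $z_e$; for $e \notin \N$, inserting the uncontrolled flow \eqref{eq:ctm_flows} with the minimal demand and supply functions and pulling $z_e - \Delta t\,(\cdot)$ inside the minimum (which turns it into a maximum) writes $z_e - \Delta t\,\phi_e$ as the maximum of $f^d_e(z)$ and of the supply-type auxiliary functions $f^s$ of Lemma \ref{lemma:auxiliary} indexed by the downstream cells of $e$, hence a maximum of maps monotone in $z$, hence monotone. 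As a sum of monotone maps is monotone, $f$ is monotone in $z$.

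For the constraints \eqref{eq:tctm_constraints} I would handle the three families in turn. The demand constraint has left-hand side $f^d_e(z) - v^*_e$, monotone by Lemma \ref{lemma:auxiliary}; the source-density constraint has left-hand side $z_e$, trivially monotone. The delicate case --- and the only point where real care is needed --- is the supply constraint $\sum_{i \in \E}\beta_{e,i}\max\{0, (z_i - v^*_i)/\Delta t\} - \underline s_e\big(\rho_e(z)\big) \leq 0$ for $e$ with $\tau_e \in \M$, because a priori the term $-\underline s_e(\rho_e(z))$ need not be monotone in $z$: through the inverse transformation $\rho = L^{-1}(\I - \rR)z$, the value $\rho_e(z)$ generally depends on $z_e$ and on the coordinates of the cells immediately upstream of $e$ with opposite signs. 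The key observation is that every cell immediately upstream of such an $e$ discharges into the merging junction $\tau_e$ and therefore belongs to $\N$, so all of the relevant columns of $\rR$ vanish and the inverse transformation collapses to $\rho_e(z) = z_e/l_e$. Consequently $-\underline s_e(z_e/l_e)$ is nondecreasing in $z_e$ (as $\underline s_e$ is nonincreasing), the summation term is monotone in the coordinates $z_i$ with $i \in \N$ --- which do not include $z_e$, there being no self-loops --- and the two pieces depend on disjoint sets of coordinates, so the left-hand side is monotone in $z$. This establishes monotone constraints and, together with the preceding paragraph, the claim.
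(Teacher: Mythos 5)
Your proof is correct and follows essentially the same route as the paper's: both decompose the dynamics and the constraint left-hand sides into the auxiliary functions of Lemma \ref{lemma:auxiliary} and invoke closure of monotonicity under nonnegative sums and pointwise maxima, together with the observation that $\rho_e(z) = z_e / l_e$ for cells immediately downstream of a merging junction. Your identity $z_e - \Delta t\,\phi_e = \min\{z_e, v_e^*\}$ for $e \in \N$ is in fact the correct form of the term the paper writes as $\max\{z_e(t), v_e(t)\}$, and your explicit argument for why the supply term depends on $z_e$ alone (all immediately upstream cells lie in $\N$, so the corresponding entries of $\rR$ vanish) makes precise a step the paper only calls ``apparent.''
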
 
\begin{proof} 
The proof requires to verify monotonicity of each relevant function individually. To start, the system dynamics for $e \notin \N$ can be written as 
\begin{align*}
z_e(t+1) = \max \left\{ f_e^d \big( z(t), w(t) \big) ,~ \max_{i \in \E^+(e)} f_{i,e}^s \big( z(t), w(t) \big) \right\} + \Delta t \cdot p_e^\top \Big( w(t) + f^r \big( z(t), w(t) \big) \Big) .
\end{align*}
Positive sums of monotone functions are monotone, and the (pointwise) maximum of monotone functions is monotone. Since the auxiliary functions $f_e^d( \cdot )$, $f_{i,e}^s( \cdot )$ and $f^r( \cdot )$ are monotone in $z(t)$ and $w(t)$, and $\Delta t \geq 0$ and $p_e^\top \geq 0$, it follows that the system dynamics for cells $e \notin \N$ are monotone. Similarly, the dynamics for controlled cells are given as
\begin{align*}
z_e(t+1) = \max\big\{ z_e(t), v_e(t) \big\} + \Delta t \cdot p_e^\top \Big( w(t) + f^r \big( z(t), w(t) \big) \Big) .
\end{align*}
The arguments of the maximization are both monotone in $z(t)$ and $w(t)$, and the system dynamics for $e \in \N$ are monotone according to the same reasoning as before. It remains to verify monotonicity of the constraints. The demand constraints can be rewritten as $f_e^d \big( z(t), w(t) \big) - v_e(t) \leq 0$ and it is apparent that the LHS is monotone in $z(t)$ and $w(t)$. For analyzing the supply constraints, note that $\rho_e(t) = \frac{z_e(t)}{l_e}$ for all $e: \tau_e \in \M$. Thus, the constraints can be expressed as
\begin{align*}
\sum_{i \in \E} \beta_{e,i} \cdot \max \bigg\{ 0, \frac{z_i(t) - v_i(t)}{\Delta t} \bigg\} - s_e \left( \frac{z_e(t)}{l_e} \right) &\leq 0 & \forall e : \tau_e \in \M,
\end{align*}
and monotonicity of the LHS in $z(t)$ and $w(t)$ is apparent.
\end{proof} 

It should be emphasized that the system model used in Lemma \ref{lemma:tctm_monotonicity} is based on a traffic network with \emph{controlled} merging junctions. Control of merging flows is critical, as one can easily verify that an autonomous version of the TCTM which models merging junctions e.g.\ via the proportional-demand allocation rule is \emph{not} monotone in the states $z(t)$. Monotonicity simplifies accounting for uncertainty in the external demands $w(t)$, as we will see subsequently. It turns out that we can characterize the influence of the fundamental diagram in a similar manner.
\begin{observation} 
\label{observation:monotonicity}
For all $d_t( \cdot ) \in \Omega_{d}$ and $s_t( \cdot ) \in \Omega_{s}$, we have that
\begin{align*}
f \big( z(t), v(t), w(t), d_t( \cdot ), s_t( \cdot ) \big) &~\leq~ f \big( z(t), v(t), w(t), \underline d( \cdot ), \underline s( \cdot ) \big), \\
g \big( z(t), v(t), w(t), d_t( \cdot ), s_t( \cdot ) \big) &~\leq~ g \big( z(t), v(t), w(t), \underline d( \cdot ), \underline s( \cdot ) \big),
\end{align*}
that is, the lower bounds on supply and demand can be used to compute upper bounds on the system equations and the LHS of the constraints of the TCTM.
\end{observation}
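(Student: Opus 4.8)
The plan is to observe that the demand and supply functions enter the right-hand side of the TCTM dynamics \eqref{eq:tctm_dynamics} and the left-hand side of the constraints \eqref{eq:tctm_constraints} only in a handful of clearly localized places, and always in a way that is antitone (order-reversing) in the function value. The two claimed inequalities then reduce directly to the pointwise bounds $\underline d_e(\rho)\le d_{t,e}(\rho)$ and $\underline s_e(\rho)\le s_{t,e}(\rho)$ built into the definitions of $\D$ and $\Su$, together with the elementary facts that a minimum of quantities each of which has been weakly decreased is itself weakly decreased, and that a maximum is monotone in each argument.

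For the dynamics $f$, I would first localize the dependence on the fundamental diagram. The controlled flows $\phi_e(t)=\max\{0,(z_e(t)-v_e(t))/\Delta t\}$ for $e\in\N$ are FD-free, and in \eqref{eq:tctm_dynamics} the term $P(R-\rR)\phi(t)$ only involves the columns of $(R-\rR)$ indexed by $\N$, i.e.\ only controlled flows; hence, writing $f$ componentwise, the single FD-dependent term in $f_e$ is the diagonal contribution $-\Delta t\,\phi_e(t)$, present only when $e\notin\N$. For such $e$, the uncontrolled flow $\phi_e(t)=\min\{d_{t,e}(\rho_e(z(t))),\ \min_{i\in\E^+(e)}\beta_{i,e}^{-1}s_{t,i}(\rho_i(z(t)))\}$ is no smaller than the corresponding quantity computed with the minimal functions $\underline d_e,\underline s_i$, so $z_e(t)-\Delta t\,\phi_e(t)$ is no larger than its minimal-FD counterpart while the remaining (FD-free) part of $f_e$ is unchanged; for $e\in\N$ there is nothing to prove. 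Equivalently, one can start from the componentwise form $z_e(t+1)=\max\{f_e^d(z,w),\ \max_{i\in\E^+(e)}f_{i,e}^s(z,w)\}+\Delta t\,p_e^\top(w+f^r(z,w))$ used in the proof of Lemma~\ref{lemma:tctm_monotonicity}, keep the demand/supply functions general, and use that $f_e^d$ and $f_{i,e}^s$ are antitone in the demand resp.\ supply value and that the pointwise maximum is monotone. Either way this gives $f(z(t),v(t),w(t),d_t(\cdot),s_t(\cdot))\le f(z(t),v(t),w(t),\underline d(\cdot),\underline s(\cdot))$ componentwise.

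For the constraints $g$, the argument is even more direct: inspecting \eqref{eq:tctm_constraints}, the demand functions appear only through the terms $-\Delta t\,d_{t,e}(\rho_e(z(t)))$ in the demand constraints ($e\in\N$), the supply functions only through $-s_{t,e}(\rho_e(z(t)))$ in the supply constraints ($\tau_e\in\M$), while the ramp bounds $z_e\le\bar s_e$ and the controlled-flow terms $\beta_{e,i}\max\{0,(z_i-v_i)/\Delta t\}$ are FD-free. Since $-d_{t,e}(\rho)\le-\underline d_e(\rho)$ and $-s_{t,e}(\rho)\le-\underline s_e(\rho)$, each such term is weakly increased by passing to the minimal functions, so $g(z(t),v(t),w(t),d_t(\cdot),s_t(\cdot))\le g(z(t),v(t),w(t),\underline d(\cdot),\underline s(\cdot))$ holds componentwise, as claimed.

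The one point that needs care — and what I regard as the main, if modest, obstacle — is the domain on which the pointwise comparisons are invoked: $\D$ and $\Su$ only guarantee $\underline d_e(\rho)\le d_{t,e}(\rho)$ and $\underline s_e(\rho)\le s_{t,e}(\rho)$ for $\rho\in[0,\bar\rho_e]$, i.e.\ for $z(t)\in\Z$, whereas under uncertainty the simulated densities $\rho_e(z(t))$ may exceed $\bar\rho_e$. For $z(t)\in\Z$ both inequalities follow verbatim as above. For the general case I would fix once and for all a natural extension of the minimal functions beyond their jam density (e.g.\ $\underline s_e(\rho):=0$ for $\rho>\bar\rho_e$, consistent with $\underline s_e(\bar\rho_e)=0$ and monotonicity, so that $s_{t,e}(\rho)\ge 0=\underline s_e(\rho)$, and $\underline d_e(\rho):=\underline d_e(\bar\rho_e)$ for $\rho>\bar\rho_e$) and state it explicitly before the Observation; alternatively, one checks that the way the Observation is used in Section~\ref{sec:main_result} only ever evaluates it along trajectories that stay in $\Z$, in which case the caveat does not arise at all. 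I would settle this convention first and then the two comparisons go through by the inspection described above.
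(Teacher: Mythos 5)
Your proposal is correct and matches the paper's intent exactly: the paper offers no written proof beyond the remark that the observation ``can easily be verified for each equation individually,'' and your argument is precisely that componentwise verification, using that the fundamental diagram enters $f$ only through the uncontrolled flows $-\Delta t\,\phi_e(t)$ (since $(R-\rR)\phi$ touches only controlled, FD-free flows) and enters $g$ only through the terms $-\Delta t\,d_{t,e}(\cdot)$ and $-s_{t,e}(\cdot)$, all antitone in the function values. Your caveat about the comparison domain $[0,\bar\rho_e]$ is a legitimate refinement the paper glosses over, and your second resolution is the operative one: in the proof of Theorem~\ref{theorem:robust_counterpart} the observation is only ever invoked at the nominal trajectory $z^*(t)$, which stays in $\Z$.
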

The observation can easily be verified for each equation individually.

\subsection{Solution of the robust counterpart} \label{sec:main_result} 

The monotonicity properties of the TCTM can be leveraged in order to solve the robust FNC problem.
\begin{theorem} 
\label{theorem:robust_counterpart}
Consider the robust FNC problem over the horizon $\{0,1,\dots,T\}$, for a network with controlled merging flows satisfying Assumption \ref{assumption:graph} and with uncertainty sets satisfying Assumption \ref{assumption:uncertainty}. The robust FNC problem is equivalent to the convex optimization problem
\begin{equation} 
\label{eq:solution_counterpart}
\begin{array}{rrll}
\C^{**} := & \underset{\rho(t), \phi(t)}{\min} & \TTS \\ [1ex]
 & \text{subject to} & \text{Conservation law \eqref{eq:ctm_densities} and ramp constraints \eqref{eq:ramp_constraints} for $\omega = \overline \omega$.} \hspace{-3cm} & \hspace{3cm}  \\[1ex]
 & & 0 \leq \phi_e(t) \leq \underline{d}_e \big( \rho_e(t) \big) ~& \forall e \in \E  \\[1ex]
 & & \sum_{i \in \E} \beta_{e,i} \cdot \phi_i(t) \leq \underline{s}_e \big( \rho_e(t) \big) ~& \forall e \notin \mathcal{S} \\[1ex]
 & & \text{$\rho(0)$ given.}
 \end{array}
\end{equation}
with optimal value $\C^{**}$, in the sense that the optimal values are equal $\C^{**} = \C^*$, whenever the problem is feasible. An optimizer to the original problem is given by the policy $\phi_e(t) := \max \left\{ 0,~ \phi^*_e(t) + \frac{p_e^\top L \cdot (\rho(t) - \rho^*(t))}{\Delta t} \right\}$ for the minimizing player, where $\phi^*_e(t)$ and $\rho_e^*(t)$ are any optimizer of problem \eqref{eq:solution_counterpart}, and the choice $ \omega = \overline \omega$ for the maximizing player.
\end{theorem}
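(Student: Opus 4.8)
The plan is to establish the equivalence $\C^{**} = \C^*$ by a two-sided inequality, using the monotonicity results of Section \ref{sec:monotone} to show that the worst-case uncertainty realization is always $\omega = \overline\omega$, and that against this realization the proposed affine-in-$z$ feedback policy recovers exactly the open-loop optimizer of \eqref{eq:solution_counterpart}.

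First I would argue $\C^* \le \C^{**}$. Take any optimizer $(\rho^*(t), \phi^*(t))$ of \eqref{eq:solution_counterpart} and define, via the transformation \eqref{eq:transformation}, the inputs $v^*(t) := \rho^*_e(t) - \Delta t \cdot \phi^*_e(t)$ for $e \in \N$ — equivalently the policy $\phi_e(t) := \max\{0, \phi^*_e(t) + p_e^\top L(\rho(t) - \rho^*(t))/\Delta t\}$ stated in the theorem, which is exactly $\max\{0, (z_e(t) - v^*_e(t))/\Delta t\}$ once one checks $z^*_e(t) = p_e^\top L \rho^*(t)$. I must verify this policy is feasible, i.e.\ lies in $\Pi$: the constraints \eqref{eq:tctm_constraints} are, by Lemma \ref{lemma:auxiliary} and Lemma \ref{lemma:tctm_monotonicity}, monotone in $z(t)$ and in $w(t)$, and by Observation \ref{observation:monotonicity} they are maximized (their LHS) by the choice $\underline d, \underline s$; since $(\rho^*, \phi^*)$ satisfies the $\overline\omega$-constraints of \eqref{eq:solution_counterpart} with equality-margin, and the closed-loop trajectory $z(t)$ under any $\omega \in \Omega$ stays pointwise below the $z^*(t)$ trajectory (this is the monotone-comparison step), feasibility of the policy follows. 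Then, evaluating the cost, $\TTS$ is nondecreasing in $\rho(t)$, hence in $z(t)$ (since $\rho = L^{-1}(\I - \rR)z \ge 0$ and $\sum_e l_e \rho_e$ is a nonnegative combination), so $\max_{\omega} \TTS(\pi, \omega) = \TTS(\pi, \overline\omega)$, and against $\overline\omega$ the closed loop reproduces $\rho^*(t)$ exactly (the correction term vanishes when $\rho(t) = \rho^*(t)$, and by induction it does). Hence $\C^* \le \max_\omega \TTS(\pi,\omega) = \TTS(\pi, \overline\omega) = \C^{**}$.

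For the reverse inequality $\C^* \ge \C^{**}$, I would fix an arbitrary feasible policy $\pi \in \Pi$ and simply let the maximizing player pick $\omega = \overline\omega$; the resulting closed-loop trajectory $(\rho(t), \phi(t))$ is feasible for \eqref{eq:solution_counterpart} — the TCTM constraints \eqref{eq:tctm_constraints} under $\overline\omega$ are precisely the (densities-form) constraints of \eqref{eq:solution_counterpart} by the algebra in Lemma \ref{lemma:equivalence_tctm} — so $\C^{**} \le \TTS(\pi, \overline\omega) \le \max_\omega \TTS(\pi,\omega)$, and minimizing over $\pi$ gives $\C^{**} \le \C^*$. Combining the two bounds yields $\C^{**} = \C^*$, and the construction in the first part exhibits the claimed optimal policy. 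Convexity of \eqref{eq:solution_counterpart} is immediate: the objective is linear, the conservation law \eqref{eq:ctm_densities} is affine in $(\rho, \phi)$ for fixed $\omega = \overline\omega$, and the demand/supply constraints are of the form $\phi \le$ (concave function of $\rho$) and (linear in $\phi$) $\le$ (concave function of $\rho$), hence convex.

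The main obstacle, I expect, is the monotone-comparison step in the first part: one must show carefully that the state trajectory $z(t)$ generated by the \emph{closed-loop} system under an arbitrary $\omega \in \Omega$ stays pointwise dominated by $z^*(t)$, the trajectory under $\overline\omega$. This needs Lemma \ref{lemma:tctm_monotonicity} (monotonicity of $f$ in $z$ and $w$, with $v$ held at $v^*(t)$) combined with Observation \ref{observation:monotonicity} (that arbitrary $d_t(\cdot), s_t(\cdot)$ only lower $f$ relative to $\underline d, \underline s$) and $w(t) \le \overline w(t)$, fed through a standard induction on $t$ — the base case $z(0) = z^*(0)$ being given. A subtle point worth a remark is that the \emph{input} $v^*(t)$ is a fixed open-loop signal here, so the closed-loop flows $\phi_e(t) = \max\{0,(z_e(t)-v^*_e(t))/\Delta t\}$ are automatically $\le \phi^*_e(t)$ whenever $z_e(t) \le z^*_e(t)$, which is what keeps the comparison and the feasibility of the supply constraints consistent; one should state this monotone-dependence-on-$v$ direction explicitly before invoking it.
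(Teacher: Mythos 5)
Your overall strategy coincides with the paper's: the easy direction ($\C^* \ge \C^{**}$) by letting the adversary play $\overline\omega$ and observing that any closed-loop trajectory is feasible for the relaxed program, and the hard direction ($\C^* \le \C^{**}$) by the candidate policy $\phi_e(t) = \max\{0,(z_e(t)-v^*_e(t))/\Delta t\}$ together with a monotone comparison $z(t)\le z^*(t)$ driven by Lemma \ref{lemma:tctm_monotonicity} and Observation \ref{observation:monotonicity}.

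There is, however, one genuine gap in your hard direction. Problem \eqref{eq:solution_counterpart} is a \emph{relaxation} of the deterministic FNC problem: the uncontrolled-flow equations \eqref{eq:ctm_flows} are replaced by the inequalities $\phi_e(t)\le \underline d_e(\rho_e(t))$ and $\sum_i\beta_{e,i}\phi_i(t)\le\underline s_e(\rho_e(t))$ for \emph{all} cells, so an optimizer $(\rho^*,\phi^*)$ of \eqref{eq:solution_counterpart} is in general \emph{not} a trajectory of the CTM. Consequently your claim that ``against $\overline\omega$ the closed loop reproduces $\rho^*(t)$ exactly'' (and that the correction term vanishes by induction) is unjustified: under the candidate policy and $\omega=\overline\omega$ the uncontrolled flows are produced by the true min-of-demand-and-supply dynamics and the resulting densities will generally differ from $\rho^*(t)$. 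The paper closes this gap by invoking the exactness result \cite[Theorem 2]{schmitt2017exact}, which guarantees a solution $\tilde\rho(t),\tilde\phi(t)$ of the \emph{non-relaxed} deterministic problem at $\overline\omega$ whose controlled flows coincide with $\phi^*_e(t)$ for $e\in\N$; one then checks via the explicit cumulative-flow formula that $p_e^\top L\tilde\rho(t)-\Delta t\,\tilde\phi_e(t)=p_e^\top L\rho^*(t)-\Delta t\,\phi^*_e(t)$ for controlled cells, so the stated policy is well defined from the relaxed optimizer, the reference trajectory $z^*(t)$ used in the induction is a genuine feasible TCTM trajectory, and $\TTS(\pi^*,\overline\omega)=\C^{**}$ follows from tightness of the relaxation rather than from literal reproduction of $\rho^*(t)$. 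Without this step both the base of your monotone comparison (feasibility of $z^*(t)$ for the non-relaxed dynamics) and the final equality $\TTS(\pi^*,\overline\omega)=\C^{**}$ are unsupported. The rest of your argument --- the feasibility of the closed loop via $g(z(t),\cdot)\le g(z^*(t),\cdot)\le 0$, the monotonicity of the cost $c^\top z(t)$ with $c={\bf 1}^\top(\I-\rR)\ge 0$, and the convexity of \eqref{eq:solution_counterpart} --- matches the paper and is sound.
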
 
Note that the nonlinear flow dynamics \eqref{eq:ctm_flows} have been relaxed and merged with the flow constraints \eqref{eq:ctm_constraints} for controlled flows. As a result, problem \eqref{eq:solution_counterpart} is exactly the convex relaxation of the deterministic FNC problem for the worst-case uncertainty realization $\omega = \overline \omega$.
\begin{proof} 
We will actually prove a slightly stronger statement. In particular, we will show that if the minimax problem is interpreted as a two-player zero sum game, the action $\omega = \overline \omega$ by the maximizing player and the suggested policy of the minimizing player form a pure \emph{Nash Equilibrium} (NE). Then, the theorem follows directly from evaluating the NE policies over the time horizon. In addition, this proof suggests that the order of play does not matter, i.e., the result of the minimax problem does not change if the maximizing player choses first and the minimizing player choses second. 

It is straightforward to show that the minimizing player cannot improve over $\C^{**}$, since
\begin{equation*} 
\begin{array}{rrll}
\C^{**} \geq& \underset{ \pi \in \Pi }{\min} & \TTS  \\ [1ex]
 & \text{subject to} & \text{CTM dynamics \eqref{eq:ctm_densities}, \eqref{eq:ctm_flows} and constraints \eqref{eq:ramp_constraints}, \eqref{eq:ctm_constraints} for $\omega = \overline \omega$.} \\[1ex]
 =& \underset{\rho(t), \phi(t)}{\min} & \TTS \\ [1ex]
 & \text{subject to} & \text{CTM dynamics \eqref{eq:ctm_densities}, \eqref{eq:ctm_flows} and constraints \eqref{eq:ramp_constraints}, \eqref{eq:ctm_constraints} for $\omega = \overline \omega$.} \\ [1ex]
 \geq& \C^*
\end{array}
\end{equation*}
With the optimization variables of the maximization problem fixed, there is no uncertainty present in the minimization problem any longer and hence, it is no longer necessary to optimize over policies but one can optimize over variables $\rho(t)$ and $\phi(t)$ instead. The last inequality follows since problem \eqref{eq:solution_counterpart} is a relaxation of the deterministic FNC problem with $\omega = \overline \omega$.

To show that the maximizing player cannot improve over $\mathcal{D}^*$ either, consider the trajectory $\rho^*(t)$ and $\phi^*(t)$ obtained as the solution to the (deterministic) optimization problem \eqref{eq:solution_counterpart}. According to \cite[Theorem 2]{schmitt2017exact}, there exists a solution $\tilde z(t)$ and $\tilde v(t)$ to the corresponding, non-relaxed, deterministic FNC problem \eqref{eq:fnc} (for $\omega = \overline \omega$) such that $\tilde \phi_e(t) = \phi^*_e(t)$ for all controlled cells $e \in \N$. According to Lemma \ref{lemma:equivalence_tctm}, this implies that there exists a solution $z^*(t)$, $v^*(t)$ for the optimization problem
\begin{align*}
\begin{array}{rrll}
& \underset{z(t), v(t)}{\min} & \sum_{t = 0}^{T} {\bf 1}^\top (\I - \underline{R} ) z(t) \\ [1ex]
 & \text{subject to} & \text{TCTM dynamics \eqref{eq:tctm_dynamics} and constraints \eqref{eq:tctm_constraints} for $\omega = \overline \omega$.} 
\end{array}
\end{align*}
such that $v^*_e(t) = p_e^\top L \cdot \tilde \rho(t) - \Delta t \cdot \tilde \phi_e(t)$. From earlier results, we know that  
\begin{align*}
v^*_e(t) &= p^\top_e L \rho(0) + \Delta t \cdot \sum_{\tau = 0}^{t-1} \bigg( p_e^\top(t) w(\tau) + \sum_{i \in \E} p_e(i) \cdot \Big( {\sum}_{j \in \N_S} \beta_{i,j} \tilde \phi_j(\tau) \Big) \bigg) - \Delta t \cdot \sum_{\tau = 0}^t \tilde \phi_e(\tau) \\
&= p^\top_e L \rho(0) + \Delta t \cdot \sum_{\tau = 0}^{t-1} \bigg( p_e^\top(t) w(\tau) + \sum_{i \in \E} p_e(i) \cdot \Big( {\sum}_{j \in \N_S} \beta_{i,j}\phi^*_j(\tau) \Big) \bigg) - \Delta t \cdot \sum_{\tau = 0}^t \phi^*_e(\tau) \\
&= p_e^\top L \rho^*(t) - \Delta t \cdot \phi_e^*(t)
\end{align*}
for all controlled cells $e \in \N$. For the minimizing player, we consider the candidate policy $\phi_e(t) := \max \left\{ 0,~ \phi^*_e(t) + \frac{p_e^\top L \cdot (\rho(t) - \rho^*(t)}{\Delta t} \right\} = \max \left\{ 0, \frac{z_e(t) - v^*_e(t)}{\Delta t} \right\}$, for all cells $e \in \N$, in the following denoted by $\pi^*_t$. For this candidate policy, we study the system evolution in terms of the CCTM and denote the resulting trajectory as $z(t)$. 
Clearly, $z(0) = z^*(0)$ and for $\omega = \overline \omega$, $z(t) = z^*(t)$ for all $t$. We now aim to show that $z(t) \leq z^*(t)$ by induction. To do so, assume $z(t) \leq z^*(t)$. It follows that
\begin{align*}
z(t+1) &= f \big( z(t), v^*(t), w(t), d_t( \cdot ), s_t( \cdot ) \big) \\[1ex]
 &\leq f \big( z^*(t), v^*(t), \overline w(t), d_t( \cdot ), s_t( \cdot ) \big) \\[1ex]
 &\leq f \big( z^*(t), v^*(t), \overline w(t), \underline d( \cdot ), \underline s( \cdot ) \big) = z^*(t+1) .
\end{align*}
Here, the first inequality follows from Lemma \ref{lemma:tctm_monotonicity} and the second inequality from Observation \ref{observation:monotonicity}. Consequently, feasibility of the trajectory $z^*(t)$ implies feasibility of the trajectory $z(t)$ since
\begin{align*}
g \big( z(t), v^*(t), w(t), d_t( \cdot ), s_t( \cdot ) \big) \leq g \big( z^*(t), v^*(t), \overline w(t), d_t( \cdot ), s_t( \cdot ) \big) \leq g \big( z^*(t), v^*(t), \overline w(t), \underline d( \cdot ), \underline s( \cdot ) \big) \leq 0.
\end{align*}
Hence, the candidate policy is always in the set of feasible policies, $\pi^*_t \in \Pi_t$. In addition, the objective 
\begin{equation*}
\TTS = \sum_{t = 0}^T \sum_{e \in \E} l_e \rho_e(t) = \sum_{t =0}^T \underbrace{ {\bf 1}^\top (\I - \underline{R} )}_{=: c^\top} z(t)
\end{equation*}
is monotone in $z(t)$, since $c_e = 1 - \sum_{i \notin \N} \beta_{e,i} \geq 1 - \sum_{i \in \E} \beta_{e,i}  \geq 0$. The latter inequality holds due to the traffic conservation law. Therefore,
\begin{equation*} 
\C^* = \min_{\pi \in \Pi} \left( \max_{\omega \in \Omega} ~ \TTS( \pi, \omega) \right) \leq \max_{\omega \in \Omega} ~ \TTS( \pi^*, \omega) \leq \TTS( \pi^*, \overline \omega ) = \C^{**}.
\end{equation*}
For the last equality, recall that the candidate policy replicates the solution to \eqref{eq:solution_counterpart} for $\omega = \overline \omega$. Combining the inequalities, we obtain $\C^*= \C^{**}$, which proves that the considered solutions form a NE, and therefore also a solution to the robust FNC problem.
 \end{proof} 

The proposed reformulation of the robust FNC problem greatly simplifies the optimization problem, since the optimization problem \eqref{eq:solution_counterpart} is no longer a nonconvex, bi-level optimization problem over infinite-dimensional policies. Instead, it is a finite-dimensional, convex optimization problem.

The policy $\pi^* \in \Pi$, from now on called the NE policy, is instrumental in the proof, since it leads to a pure NE, together with the worst-case uncertainty realization $\overline \omega$. However, it is not subgame-perfect. This means that even though it achieves minimal cost if the worst-case uncertainty realization occurs in every time step, it is suboptimal if other uncertainty realizations are encountered. The latter case is typical in practice, since the uncertain parameters are usually stochastic and modeling nature as a cost-maximizing antagonist is merely a tool used to design a robust control policy. To improve performance for non-worst-case uncertainty realizations, while still retaining robustness guarantees, one can use subgame-perfect strategies. By definition, a subgame-perfect strategy for the minimizing player in the robust FNC problem is obtained if problem \eqref{eq:solution_counterpart} is resolved at every time-step, for the observed state (which depends on the uncertainty realization in the previous time step) and with an appropriately truncated horizon. In control terms, this corresponds to introducing feedback instead of using essentially a feedforward approach.\footnote{One can argue that even the NE policy is a feedback strategy if expressed in terms of the densities, however, in TCTM variables it corresponds to pure feedforward control: $v_e(t) =  v^*_e(t)$, for all $e \in \N$, is chosen where $v^*_e(t)$ is computed by solving problem \eqref{eq:solution_counterpart} once.} A disadvantage of the subgame-perfect policy is the need to solve optimal control problems with long horizons at every time step. This might pose a computational challenge even if only convex optimization problems need to be solved, depending on the sampling time, the horizon length and the size of the road network. Therefore, we will also consider receding horizon policies in Section \ref{sec:receding_horizon}, which are designed to approximate the subgame-perfect policy with less computational effort, while retaining the robustness guarantees.

Before proceeding, we consider the implications of the previous result for deterministic settings, where we can establish that the optimal TTS of the FNC with controlled merging junction is monotone in external demand and demand and supply functions, in the following sense:
\begin{corollary} \label{corollary:monotonicity_objective}
Consider the deterministic FNC problem for a network with controlled merging junctions for $0\leq t \leq T$ and denote its optimal value as a function of the (known) uncertainty realization $\omega$ as $\TTS( \omega )$. Assume the network satisfies Assumption \ref{assumption:graph} and for the uncertainty realization $\overline \omega$, Assumption \ref{assumption:fd} is satisfied. Then, for all $\omega \in \Omega$, where $\Omega$ is bounded by the worst-case uncertainty realization $\overline \omega$, it holds that $\TTS( \omega ) \leq \TTS( \overline \omega )$.
\end{corollary}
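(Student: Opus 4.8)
The plan is to read the corollary directly off the proof of Theorem~\ref{theorem:robust_counterpart}: the Nash-equilibrium policy constructed there already certifies the bound on a scenario-by-scenario basis. Fix an arbitrary realization $\omega \in \Omega$; if the deterministic FNC problem at $\overline\omega$ is infeasible the claim is vacuous, so assume it is feasible, whence the convex relaxation~\eqref{eq:solution_counterpart} is feasible as well. Let $\phi^*(t)$, $\rho^*(t)$ be an optimizer of~\eqref{eq:solution_counterpart}, let $\pi^*$ be the associated NE policy $\phi_e(t) = \max\{0,~\phi^*_e(t) + p_e^\top L\,(\rho(t)-\rho^*(t))/\Delta t\}$, $e \in \N$, and write $z^*(t) := PL\rho^*(t)$ for the TCTM trajectory it produces under $\overline\omega$. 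The proof of Theorem~\ref{theorem:robust_counterpart} supplies exactly the two facts needed: (i) $\pi^* \in \Pi$, i.e.\ the flows it generates satisfy the ramp constraints~\eqref{eq:ramp_constraints} and the demand and supply constraints~\eqref{eq:ctm_constraints} for \emph{every} admissible uncertainty realization, in particular for the chosen $\omega$; and (ii) the TCTM trajectory $z(t)$ generated by $\pi^*$ under $\omega$ satisfies $z(t) \leq z^*(t)$ for all $t$, by the induction there based on Lemma~\ref{lemma:tctm_monotonicity} and Observation~\ref{observation:monotonicity}.

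Given (i) and (ii) the argument is three short steps. Applying $\pi^*$ to the deterministic CTM with the realized fundamental diagram and external demand of $\omega$ yields, by (i), a trajectory that is feasible for the deterministic FNC problem at $\omega$, hence $\TTS(\omega) \leq \TTS(\pi^*, \omega)$. Since the objective equals $\sum_{t=0}^T c^\top z(t)$ with $c^\top = \one^\top(\I - \rR)$ and $c_e = 1 - \sum_{i \notin \N}\beta_{e,i} \geq 1 - \sum_{i \in \E}\beta_{e,i} \geq 0$ (the nonnegativity computation from the theorem's proof), it is nondecreasing in the state trajectory, so (ii) gives $\TTS(\pi^*, \omega) \leq \TTS(\pi^*, \overline\omega)$. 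Finally, since $\overline\omega$ satisfies Assumption~\ref{assumption:fd}, problem~\eqref{eq:solution_counterpart} is the tight convex relaxation of the deterministic FNC problem at $\overline\omega$ by \cite[Theorem~2]{schmitt2017exact}, and $\pi^*$ reproduces its optimizer when $\omega = \overline\omega$; hence $\TTS(\pi^*, \overline\omega) = \C^{**} = \TTS(\overline\omega)$. Chaining the three relations gives $\TTS(\omega) \leq \TTS(\overline\omega)$.

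The substance has all been borrowed from Theorem~\ref{theorem:robust_counterpart}, so the only points that need care in the write-up are bookkeeping, and that is where I expect the minor friction. One must check that the deterministic FNC problem at a general $\omega$, and thus $\TTS(\omega)$, is well defined even when the realized demand and supply functions violate Assumption~\ref{assumption:fd}: the model of Definition~\ref{definition:ctm} remains well posed because the fundamental diagram is specified for all $\rho \in \R_+$, the horizon is finite and the external inflow is bounded, so the feasible set is nonempty (it contains the trajectory generated by $\pi^*$) and the infimum of the continuous objective over it is attained. One must also note that feasibility genuinely transfers from the robust policy class $\Pi$ to the single-scenario problem at $\omega$, which is immediate since membership in $\Pi_t$ already requires constraint satisfaction for all $\omega_t \in \Omega_t$. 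No new monotonicity argument is required; the corollary is just the Theorem's proof re-read with the maximizing player's move pinned to an arbitrary $\omega$ instead of $\overline\omega$.
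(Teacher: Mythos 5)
Your argument is correct and follows the same route as the paper, which simply states that the corollary "follows immediately from Theorem \ref{theorem:robust_counterpart}": you chain $\TTS(\omega) \leq \TTS(\pi^*,\omega) \leq \TTS(\pi^*,\overline\omega) = \C^{**} = \TTS(\overline\omega)$ using exactly the feasibility, monotonicity, and tightness facts established in the theorem's proof. Your additional remarks on the well-posedness of the deterministic problem at a general $\omega$ (where Assumption \ref{assumption:fd} may fail) are sound bookkeeping that the paper leaves implicit.
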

This result follows immediately from from Theorem \ref{theorem:robust_counterpart}. Note that whereas Assumption \ref{assumption:fd} is required to hold for $\overline \omega$, that is, for the lower bounds on demand $\underline d_{e}(\cdot)$ and supply function $\underline s_{e}(\cdot)$, it is \emph{not} required for $\omega$ ($d_{e,t}(\cdot)$ and $s_{e,t}(\cdot)$, respectively). This means that we can use Corollary \ref{corollary:monotonicity_objective}, together with suitably defined lower bounds on demand and supply functions, in order to compute upper bounds on the optimal TTS in traffic networks for which Assumption \ref{assumption:fd} is not satisfied, for example, if demand functions with a capacity drop are considered.

\section{Discussion and extensions} \label{sec:discussion} 

In this section, we first explore the limitations of the results and demonstrate via counterexamples that Theorem \ref{theorem:robust_counterpart} does not extend to entirely uncontrolled merging junctions described e.g.\ by the priority-proportional merging rule. Subsequently, we introduce two extensions of our main result. First, we show in Section \ref{sec:ramp_metering} that the results extend to freeways controlled via ramp metering, where only the merging flow from the onramp is controlled, if an asymmetric merging model for onramp merging junctions is used. Second, we explore in Section \ref{sec:receding_horizon} how receding horizon control can be used to approximate the subgame-perfect solution, without the need to solve a ``large" optimization problem at every time step.

\subsection{Counterexamples} 
\label{sec:counterexamples}
Theorem \ref{theorem:robust_counterpart} relies on explicitly identifying the worst-case FD and external demand and only holds as long as merging junctions are controlled. If merging flows are uncontrolled and, for example, modeled by the demand-proportional rule, one can construct counterexamples that show that the minimal FD and maximal external demand are \emph{not} the worst case uncertainty realization. The first counterexample demonstrates that for networks with uncontrolled merging junctions, an increase in external demand can in fact lead to an overall decrease in TTS.

\begin{example} \label{example:ex1} 
\begin{figure}[t] 
	\centering
	\begin{subfigure}[b]{0.33\textwidth}
		\includegraphics[width=\textwidth]{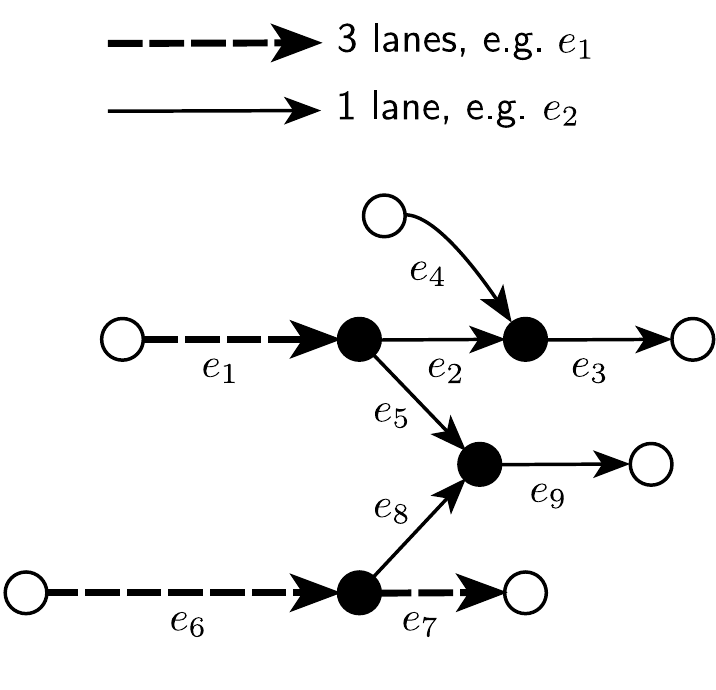}
		\caption{Network structure.}
		\label{fig:counterexample1_graph}
	\end{subfigure} \hspace{0.5cm}	
	\begin{subfigure}[b]{0.6\textwidth}
		\includegraphics[width=\textwidth]{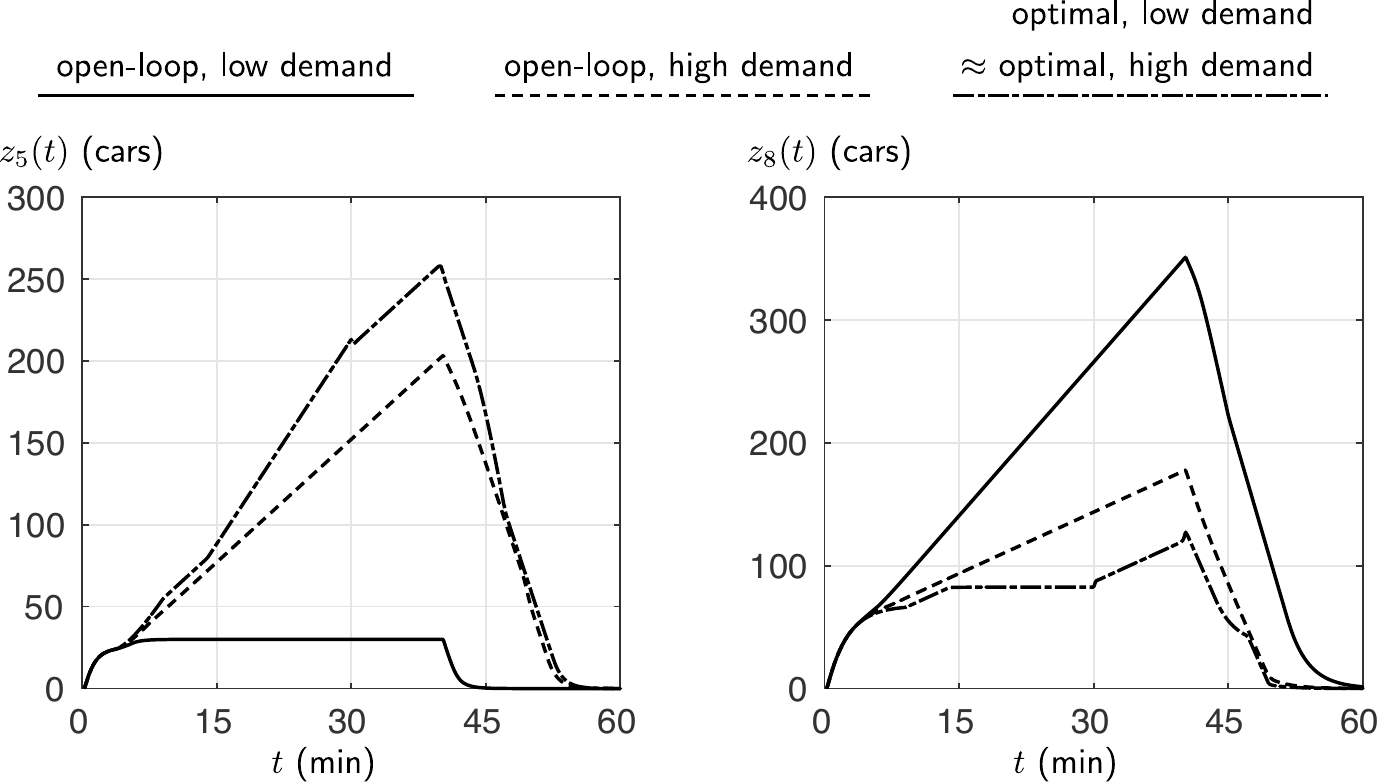}
		\caption{Evolution of the backlog $z_e(t)$ in cells $e_5$ and $e_8$.}
		\label{fig:counterexample1_plots}
	\end{subfigure}
	\caption{In a network with both FIFO-diverging junctions and uncontrolled merging junctions (Figure \ref{fig:counterexample1_graph}), an increase in external demand may lead to a decrease in TTS. In this particular example, increasing the external demand $w_{4}(t)$ in the uncontrolled network brings the evolution of the backlog of cells $e_5$ and $e_8$ closer to the optimal trajectories in the controlled case, as depicted in Figure \ref{fig:counterexample1_plots}.}
	\label{fig:counterexample1}
\end{figure} 
Consider the network depicted in Figure \ref{fig:counterexample1_graph}. The length of the cells is $l_1 = 2$km, $l_6 = 4$km and $l_e = 0.5$km otherwise\footnote{For ease of notation, we will index quantities as $l_1$, $l_2$ \dots instead of $l_{e_1}$, $l_{e_2}$ \dots from now on, whenever cells are denoted as $e_{1}, e_{2}, \dots$.}. Cells consist of either three or one lane, as indicated in the figure. The demand function of each lane is given as $d(\rho) = \min \{ v \rho(t), F\}$, with free-flow speed $v = 100$km/h and lane capacity $F = 2000$cars/h. The supply function of each lane is given as $s(\rho) = \min \{ F, (\bar \rho - \rho(t)) w \}$, with traffic jam density $\bar\rho = 120$cars/km per lane and congestion wave speed $w = 20$km/hs. The diverging junctions are modeled using the FIFO model with $\beta_{2,1} = 2/3$, $\beta_{5,1} = 1/3$, $\beta_{7,6} = 3/4$ and $\beta_{8,6} = 1/4$. We aim to compare the system evolution for the case when the merging junctions upstream of cells $e_9$ and $e_3$ are controlled optimally with the system evolution when the network is uncontrolled. In the latter case, we adopt the demand-proportional merge model used e.g.\ in \cite{coogan2016stability}, in which the supply of free space of a merge is allocated proportional to the demands of the upstream cells, whenever total demand exceeds supply. In particular,
\begin{equation*}
\phi_5(t) = d_5 \big( \rho_5(t) \big) \cdot \min \left\{ 1,~  \frac{ s_9 \big( \rho_9(t) \big) }{ d_5 \big( \rho_5(t) \big) + d_8 \big( \rho_8(t) \big) } \right\}.
\end{equation*}
Merging flows $\phi_8(t)$, $\phi_2(t)$ and $\phi_4(t)$ are computed analogously. We aim to compare the system evolution in a ``low-demand" scenario, where $w_4(t) \equiv 0$ with a ``high-demand" scenario, where $w_4(t) = 600$cars/h for $0 \leq t \leq 40$min. In both cases, the external demands $w_1(t) = 3000$cars/h and $w_6(t) = 6000$cars/h are constant for $0 \leq t \leq 40$min and zero afterwards. The system evolution is studied for a horizon of $T = 60$min, after which the network is almost completely empty in both scenarios. In accordance with Theorem \ref{theorem:robust_counterpart} and Corollary \ref{corollary:monotonicity_objective}, we find that the TTS for the low-demand scenario $\TTS^*( \underline w(t) ) = 1295$h is smaller than the one for the high-demand scenario $\TTS^*( \overline w(t) ) = 1350$h if the network is controlled optimally. However, if the network is uncontrolled (in ``open-loop"), it turns out that an \emph{increase} in the external demand $w_4(t)$ may lead to a \emph{decreases} in TTS, from $\TTS^{\text{ol}}( \underline w(t) ) = 2373$h in the low-demand scenario to $\TTS^{\text{ol}}( \overline w(t) ) = 1652$h in the high-demand scenario.

This effect can be understood with the help of Figure \ref{fig:counterexample1_plots}, which depicts the evolution of the backlog $z_5(t) = l_5 \rho_5(t) + \beta_{5,1} l_1 \rho_1(t)$ and $z_8(t) = l_8 \rho_8(t) + \beta_{8,6} l_6 \rho_6(t)$. Flows from cells $e_5$ and $e_8$ compete for the supply of free space of cell $e_9$, which is the major bottleneck in the low demand case. Figure \ref{fig:counterexample1_plots} shows that the optimal control policy gives priority to the flow from cell $e_8$ (leading to small $z_8(t)$), to avoid propagation of congestion into cell $e_6$, which serves a large proportion of the total traffic demand. By contrast, equal priority is given to both flows into cell $e_9$ in the uncontrolled network. For the low-demand scenario, this means that the (comparatively smaller) demand from cell $e_5$ can be served completely, while a congestion forms in cells $e_8$ and $e_6$ (leading to large backlog $z_8(t)$). In particular, traffic seeking to travel from cell $e_6$ to $e_7$ is also obstructed. By contrast, the additional, external demand $w_4(t)$ in the high-demand scenario causes congestion in cell $e_2$, which blocks flow into cell $e_5$ via the FIFO-diverging junction. Hence, a large percentage of the traffic demand from cell $e_8$ flows unobstructed, much in the same way as if the merging junction upstream of cell $e_9$ was controlled optimally. Intuitively speaking, the demand $w_4(t)$ acts like a switch which partially alleviates the need for controlling the merging junction and hence, the performance in the high-demand scenario is improved in comparison to the low-demand scenario when the network is uncontrolled.
\end{example}

A similar example using Daganzo's priority rule can be constructed, if the merging priorities are chosen appropriately. Due to this counterexample, it is clear that Theorem \ref{theorem:robust_counterpart} and Corollary \ref{corollary:monotonicity_objective} do not extend to networks with uncontrolled merging junctions.

The second counterexample demonstrates in a similar manner that a reduction in free-flow velocity, i.e.\, a smaller demand function, can be beneficial if demand functions are not monotone. 
\begin{example} \label{example:ex2} 
\begin{figure}[t] 
	\centering
	\begin{subfigure}[b]{0.52\textwidth}
		\includegraphics[width=\textwidth]{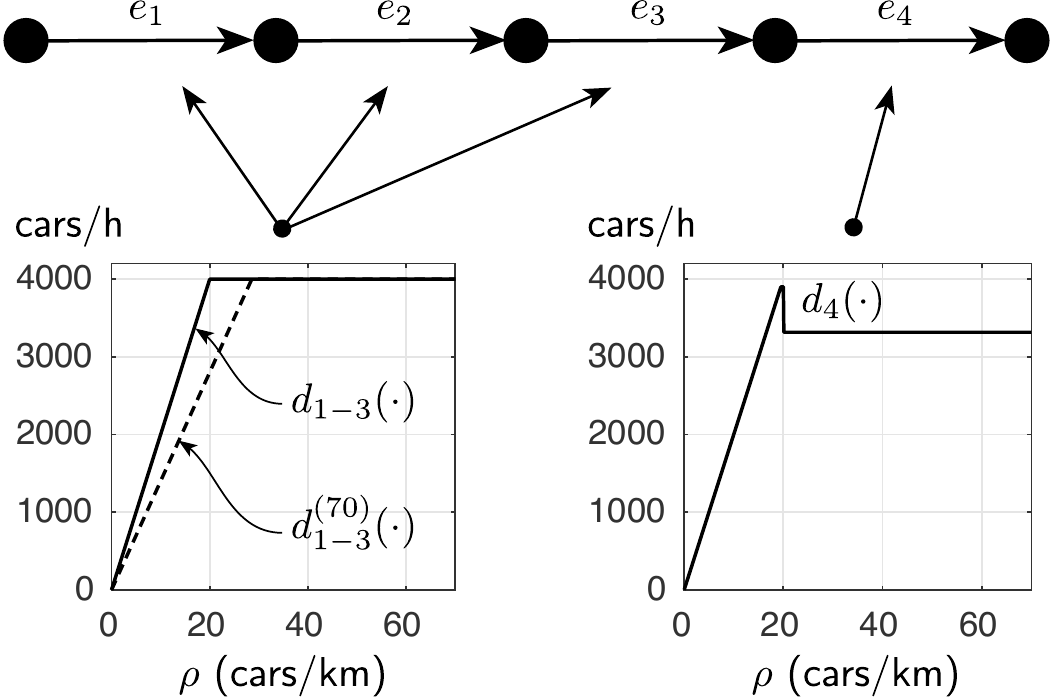}
		\caption{The network topology is a line, modeling a freeway. The demand functions of cell $e_4$ models a capacity drop.}
		\label{fig:counterexample2_graph}
	\end{subfigure} \hspace{1cm}	
	\begin{subfigure}[b]{0.35\textwidth}
		\includegraphics[width=\textwidth]{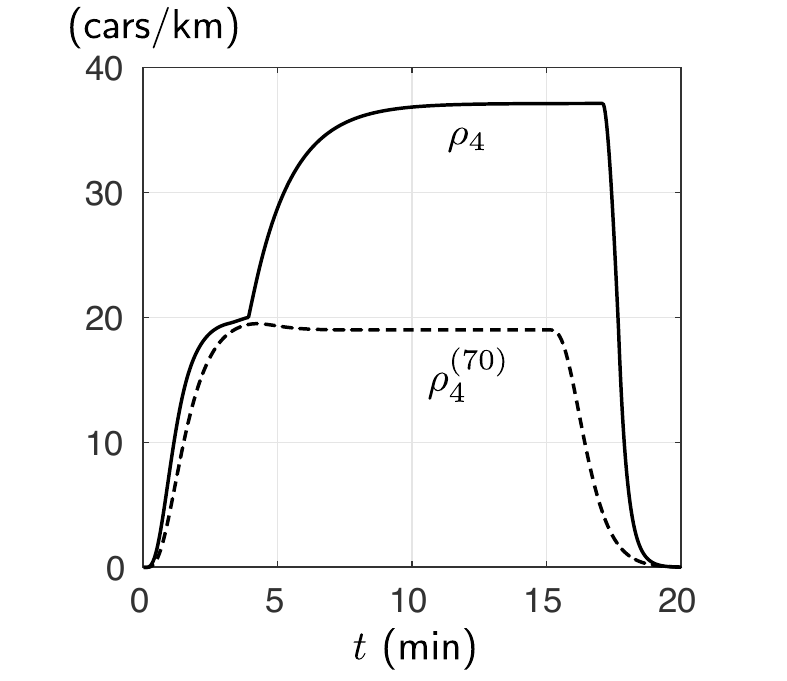}
		\caption{Evolution of $\rho_4(t)$. In the case without speed limit, the critical density ($20$cars/km) is exceeded and the capacity drop comes into effect.}
		\label{fig:counterexample2_plots}
	\end{subfigure}
	\caption{In a network with non-monotone demand functions, in particular a capacity drop, a reduction in free-flow velocity can improve TTS.}
	\label{fig:counterexample2}
\end{figure} 
Consider the network depicted in Figure \ref{fig:counterexample2_graph}. All cells are $l_e = 0.5$km long and consist of two lanes each. The demand function of each lane in cells $e_{1-3}$ is the same, $d(\rho(t)) = \min \{ v \rho(t), F \}$, with free-flow speed $v = 100$km/h and lane capacity $F = 2000$cars/h. The supply function of each lane in all cells $e_{1-4}$ is given as $s(\rho(t)) = \min \{ F, (\bar\rho - \rho(t)) w \}$, with traffic jam density $\bar\rho = 120$cars/km per lane and congestion wave speed $w = 25$km/h. We also consider the case, when a speed limit of $70$km/h is introduced for cells $e_{1-3}$, leading to smaller demand functions $d_{1-3}^{(70)}(\cdot)$ in comparison the baseline scenario, as depicted in Figure \ref{fig:counterexample2_graph}. Parameters other than the free-flow velocity remain unchanged. Cell $e_4$ has a lower capacity than the first three cells. Its maximal demand is $d_4( 20 \text{cars/km} ) = 3900$  and in addition, its demand function experiences a capacity drop with $15\%$ capacity reduction, as soon as a critical density ($20$cars/km) is exceeded. We will compare the system performance in terms of TTS with and without the speed limit. Note that even though demand and supply functions are ordered, in the sense that $d_{1-3}(\rho) \geq d_{1-3}^{(70)}(\rho)$ for all $\rho \geq 0$, Corollary \ref{corollary:monotonicity_objective} is \emph{not} applicable, since the minimal demand function $\underline d_4(\cdot) = d_4(\cdot)$ does not satisfy Assumption \ref{assumption:fd}. 

The system does not contain any junctions and therefore, it is uncontrolled. The external demand equals $w_1(t) = 3800$cars/h for $0\text{min } \leq t < 2$min and $4\text{min } \leq t < 15$min. During a short time $2\text{min } \leq t < 4$min, the external demand increases to $w_1(t) = 3950$cars/h. For $t \geq 15$min, it equals zero. The evolution of the density in cell $e_4$ is depicted in Figure \ref{fig:counterexample2_plots}. Note that for $2\text{min } \leq t < 4$min, the external demand exceeds the capacity of cell $e_4$. In the case with $v_{1-3} = 100$km/h, this excess demand leads to the density $\rho_4(t)$ exceeding the critical density of $20$cars/km at $t \approx 4$min. Consequently, the capacity drop comes into effect and a congestion forms. Note that while the capacity drop is in effect, not even $3800$cars/h can be served, which means that the congestion queue expands until $t = 15$min, extending into cells $e_3$ and $e_2$. In this case, $\TTS = 15.3$h. By contrast, reducing the free-flow velocity to $v_{1-3} = 70$cars/h slows down the propagation of the increased demand and ``smooths" the traffic flow reaching cell $e_4$. As a consequence, the critical density in cell $e_4$ is never exceed and no congestion forms, leading to an improvement in TTS, with $\TTS^{(70)} = 12.6$h. 
\end{example}
It should be emphasized that in both examples, the parameters have been chosen to amplify the effect on TTS. Determining how relevant these effects are in the real world requires further, empirical work. Examples for which Theorem \ref{theorem:robust_counterpart} and Corollary \ref{corollary:monotonicity_objective} do hold are presented in the numerical study in Section \ref{sec:numerical}.

\subsection{Extension to ramp metering} \label{sec:ramp_metering} 

An important application of the FNC problem is freeway ramp metering, where traffic lights are installed on onramps to control the flow of vehicles onto the freeway mainline. The objective is to prevent, delay or reduce congestion on the mainline, to improve bottleneck flow and prevent obstruction of upstream off-ramps by the congestion queue. In ramp metering, the flow from the onramp is controlled but the flow on the mainline is not. Hence, the controlled junction model introduced in Section \ref{sec:problem} is not suitable to model such a situation. Previous work has shown that if an \emph{asymmetric} merging model \cite{gomes2006optimal,gomes2008behavior} is used for such onramps, the convex relaxation of the FNC problem remains tight \cite{schmitt2017exact}. The disadvantage of such a model is that it requires the additional assumption that mainline congestion does not obstruct the metered onramp flow, an assumption that is not satisfied for all metered onramps in reality. However, we can show that if such an assumption is satisfied and the asymmetric merging model is used, all previous results generalize.

Consider an \emph{asymmetric junction}, where exactly two upstream cells $e$ and $i$ merge into one downstream cell $j$. For simplicity, we assume $\beta_{j,e} = \beta_{j,i} = 1$. One upstream cell $e$ models the onramp and its flow $\phi_e(t)$ is controlled, subject to the constraints $0 \leq \phi_e(t) \leq d_{t,e} \big( \rho_e(t) \big)$. It is \emph{not} explicitly constrained by the supply of free space in cell $j$, but instead, we make the a priori assumption that $d_{t,e} \big( \rho_e(t) \big) \leq s_{t,j} \big( \rho_j(t) \big)$, that is, the supply constraint never becomes active (see Proposition \ref{proposition:extension_onramps}). Such an assumption is hard to verify a priori, but one can resort to a posteriori verification to check validity of a particular solution. Details are provided in \cite{schmitt2017exact}. This assumption alleviates the need for control of the second upstream cell $i$, called the mainline cell, whose flow $\phi_i(t)$ is computed as $\phi_i(t) = \min \left\{ d_{t,i} \big( \rho_i(t) \big), s_{t,j} \big( \rho_j(t) \big) - \phi_e(t) \right\}$. Note that $\phi_i(t) \geq 0$ because of the a priori assumption. 

\begin{proposition} \label{proposition:extension_onramps} 
If in addition to controlled junctions, asymmetric junctions are present in a network satisfying Assumptions \ref{assumption:graph}, then Theorem \ref{theorem:robust_counterpart} remains valid for the FNC problem with initial state $\rho(0)$ and uncertainty set $\Omega$ (satisfying Assumption \ref{assumption:uncertainty}) if $d_{t,e} \big( \rho_e(t) \big) \leq s_{t,j} \big( \rho_j(t) \big)$ holds for all onramp cells $e$ and cells $j$ downstream of the asymmetric junction, for all $\omega \in \Omega$, for any feasible control policy and for all $t \geq 0$.
\end{proposition}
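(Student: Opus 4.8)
\emph{Proof proposal.} The plan is to show that the monotone reformulation of Section~\ref{sec:monotone} carries over to networks containing asymmetric junctions, after which the proof of Theorem~\ref{theorem:robust_counterpart} applies with only notational changes. First I would adapt the state transformation: form the reduced routing matrix $\rR$ by zeroing out the outflow columns of \emph{all controlled cells}, that is, of the cells feeding controlled merges \emph{together with} the onramp cells of asymmetric junctions, while keeping the mainline edges of asymmetric junctions intact. Every vertex of the reduced graph then has in-degree at most one --- an asymmetric-merge vertex retains precisely its mainline edge --- so the reduced graph is again a forest, $\rR$ has spectral radius below one, $P := (\I - \rR)^{-1} \ge 0$ exists, and $z := P L \rho$ is well defined and invertible, with $\rho_j(z) = (z_j - z_i)/l_j$ for a cell $j$ immediately downstream of an asymmetric junction with upstream cells $e$ (onramp) and $i$ (mainline), since $\beta_{j,e} = \beta_{j,i} = 1$ and the onramp edge $e$ is cut. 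The onramp flow is re-parametrized as $\phi_e(t) := \max\{0, (z_e(t) - v_e(t))/\Delta t\}$ exactly as before, and under the a priori assumption $d_{t,e}(\rho_e(t)) \le s_{t,j}(\rho_j(t))$ the mainline flow $\phi_i(t) = \min\{d_{t,i}(\rho_i(z(t))),\, s_{t,j}(\rho_j(z(t))) - \phi_e(t)\}$ is nonnegative; moreover $\phi_i(t) \le d_{t,i}(\rho_i(t)) \le \gamma\,\rho_i(t)$, so the cone $\R_+^n$ remains invariant and the analogue of Lemma~\ref{lemma:equivalence_tctm} holds with the dynamics still of the form \eqref{eq:tctm_dynamics}: the mainline flow is merely one more entry of $\phi(t)$, and the columns of $R - \rR$ are nonzero only at the cut (controlled/onramp) cells, so $\phi_i(t)$ does not appear in the $P(R-\rR)\phi(t)$ term.

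Next I would extend Lemmas~\ref{lemma:auxiliary} and \ref{lemma:tctm_monotonicity} and Observation~\ref{observation:monotonicity} to the new flow equation; this is the only genuinely new bookkeeping. Fixing $v = v^*$ and the worst-case diagram $d = \underline d$, $s = \underline s$, the quantity $z_i(t) - \Delta t\,\phi_i(t)$ equals $\max\{\, f^d_i(z(t)),\ z_i(t) - \Delta t\,\underline s_j(\rho_j(z(t))) + \Delta t\,\phi_e(z_e(t)) \,\}$. The first argument is covered by Lemma~\ref{lemma:auxiliary}, and a finite-difference estimate for the second argument --- using $\rho_j(z) = (z_j - z_i)/l_j$, monotonicity of $\phi_e$ in $z_e$, that $\underline s_j$ is nonincreasing and $\gamma$-Lipschitz, and the bound $\Delta t \le l_j/\gamma$ from \eqref{eq:lipschitz} --- shows it is nondecreasing in $z$. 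Hence $z_i(t+1) = \max\{\cdots\} + \Delta t\, p_i^\top\big(w(t) + (R-\rR)\phi(t)\big)$ is monotone in $z(t)$ and in $w(t)$, exactly as for the remaining cells; monotonicity of the constraints is verified the same way, noting that the supply constraint $\phi_e(t) + \phi_i(t) \le s_{t,j}(\rho_j(t))$ is implied automatically by the definition of $\phi_i(t)$ together with the a priori assumption, and is precisely the cell-$j$ supply constraint (with $\beta_{j,e} = \beta_{j,i} = 1$) that already appears in problem \eqref{eq:solution_counterpart}. Observation~\ref{observation:monotonicity} is immediate for the new equation, since $\phi_i(t)$ enters $z(t+1)$ only through the term $-\Delta t\,\phi_i(t)$ in the $z_i$-component, so replacing $d_{t,i}$ and $s_{t,j}$ by their lower bounds can only raise $z(t+1)$ componentwise.

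With these extensions in hand, the proof of Theorem~\ref{theorem:robust_counterpart} carries over with only notational changes. The minimizing player cannot beat $\C^{**}$, because problem \eqref{eq:solution_counterpart} is a relaxation of the deterministic FNC problem for $\omega = \overline\omega$, and by the extension of \cite[Theorem~2]{schmitt2017exact} to asymmetric junctions (cf.\ the discussion preceding this proposition) this relaxation is attained by a non-relaxed solution $\rho^*(t), \phi^*(t)$, from which one recovers the TCTM solution $z^*(t), v^*(t)$ with $v^*_e(t) = p_e^\top L\rho^*(t) - \Delta t\,\phi_e^*(t)$. The maximizing player cannot beat $\C^{**}$ either: with the candidate feedback policy $\phi_e(t) = \max\{0,\, \phi^*_e(t) + p_e^\top L\,(\rho(t) - \rho^*(t))/\Delta t\}$ on the controlled (merge and onramp) cells, the induction of Theorem~\ref{theorem:robust_counterpart} --- combining Lemma~\ref{lemma:tctm_monotonicity} with Observation~\ref{observation:monotonicity} --- gives $z(t) \le z^*(t)$ for every $\omega \in \Omega$, hence feasibility of the policy and a cost of at most $\sum_t c^\top z^*(t) = \C^{**}$, so $\overline\omega$ together with this policy is a Nash equilibrium and $\C^* = \C^{**}$.

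The step I expect to need the most care is the interplay between the a priori assumption and the construction of the policy $\pi^*$: since the assumption is hypothesized to hold ``for any feasible control policy'', one must argue that $\pi^*$ is itself feasible --- which is exactly what the $z(t) \le z^*(t)$ induction delivers, step by step --- and that $\phi_i(t) \ge 0$ along every intermediate trajectory, so that each induction step is well posed. One should also check that $\overline\omega$ (which uses $\underline d_e$ and $\underline s_e$) lies in $\Omega$, so that the a priori assumption applies to the worst-case deterministic problem itself, guaranteeing nonnegative mainline flows there and hence that problem \eqref{eq:solution_counterpart} is well posed. The remaining verifications --- the forest structure of the reduced graph, and nonnegativity of the cost vector $c = (\I - \rR)^\top\one$ --- are routine and identical to the fully-controlled-merge case.
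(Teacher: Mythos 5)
Your proposal is correct and follows essentially the same route as the paper: redefine the reduced routing matrix so that only the onramp edge of an asymmetric junction is cut while the mainline edge is retained, then verify that Lemma \ref{lemma:tctm_monotonicity} and Observation \ref{observation:monotonicity} still hold — the only new case being the mainline cell $i$, whose update is $\max\{f_i^d(z), f_{j,i}^s(z) + \Delta t\,\phi_e\} + \Delta t\, p_i^\top(w + f^r)$, monotone by Lemma \ref{lemma:auxiliary} and monotonicity of the controlled onramp flow in $z_e$ — so that the Nash-equilibrium argument of Theorem \ref{theorem:robust_counterpart} (via \cite[Theorem 2]{schmitt2017exact}) carries over unchanged. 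Your additional checks (nonnegativity of the mainline flow under the a priori assumption, invertibility of the transformation, and feasibility of $\pi^*$ along intermediate trajectories) are details the paper leaves implicit but are consistent with its argument.
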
 
\begin{proof} 
For a traffic network which contains onramp junctions, define the reduced turning matrix $\rR$ as $\rR_{i,j} = \beta_{i,j}$ if $\tau_j$ is not a merging junction, or if $\tau_j$ is an asymmetric junction and cell $i$ is the mainline. Using the reduced routing matrix, the state transformation \eqref{eq:transformation} is performed as before. 

To show that Theorem \ref{theorem:robust_counterpart} still holds, it is sufficient to verify that Lemma \ref{lemma:tctm_monotonicity} and Observation \ref{observation:monotonicity} hold for the TCTM with asymmetric junctions, as the proof of Theorem \ref{theorem:robust_counterpart} only relies on the monotonicity properties that are verified in the intermediate results and on \cite[Theorem 2]{schmitt2017exact}, which applies to networks with asymmetric junctions. To show that Lemma \ref{lemma:tctm_monotonicity} holds for the TCTM with asymmetric junctions, one can extend the original proof, where monotonicity of each relevant function was verified individually. In particular, the dynamics and the demand constraints of controlled onramp flows into an asymmetric junction are identical to those of merging flows in controlled junctions and hence, they are monotone. In addition, one can verify that the dynamics of cells not immediately upstream of an asymmetric junction do not change. Therefore, it only remains to analyze monotonicity of the dynamics of mainline cells $i$ immediately upstream of an asymmetric junction. We find that
\begin{align}
\label{eq:asymmetric_dynamics}
z_i(t+1) = \max \left\{ f_i^d \big( z(t) \big) ,~ f_{j,i}^s \big( z(t) \big) + \Delta t \cdot \phi_e(t) \right\} + \Delta t \cdot p_i^\top \Big( w(t) + f^r \big( z(t), v(t) \big) \Big) ,
\end{align}
where $e$ is the controlled onramp and $j$ is the cell immediately downstream of the asymmetric junction. Monotonicity of the auxiliary functions $f_i^d(\cdot)$, $f_{j,i}^s(\cdot)$ and $f^r(\cdot)$ in $z(t)$ has been shown in Lemma \ref{lemma:auxiliary}. The flow from the onramp $\phi_e(t) = \max \left\{ 0, \frac{z_e(t) - v^*_e(t)}{\Delta t} \right\}$ is controlled and monotone in the state $z_e(t)$ and hence, the dynamics are monotone in $z(t)$ and $w(t)$ as well. Verifying Observation \ref{observation:monotonicity} for \eqref{eq:asymmetric_dynamics} is straightforward.
\end{proof} 
In the following sections, in particular in the numerical evaluation in Section \ref{sec:numerical}, we will allow for asymmetric junctions in ``networks with controlled junctions" and if asymmetric junctions are present, we assume that the supply condition specified in Proposition \ref{proposition:extension_onramps} is satisfied, whenever a theoretical statement is made. For numerical examples, we resort to an a posteriori verification of this condition for the computed solution instead. Also, we assume that the set $\N$ contains all controlled flows, that is, all merging flows into controlled junctions (as before) and in addition, the onramp flows into asymmetric junctions, but not the mainline flows into asymmetric junctions.

\subsection{Receding horizon control} \label{sec:receding_horizon} 

A potential problem of a policy based on solving the robust counterpart lies in its conservativeness. If the uncertainty sets are large, solutions computed for the worst-case disturbance might lead to performance deterioration if the actual uncertainty realizations are less extreme. One can introduce feedback to reduce the adverse effects. One such example is the subgame-perfect solution described before, where problem \eqref{eq:solution_counterpart} is re-solved at every time step $t$, for the entire remaining horizon $\{t,t+1,\dots,T\}$. Solving the entire problem at every time step is computationally expensive, though. Instead, one can attempt to approximate the optimal solution by only solving the optimal control problem for a shorter control horizon $\{t,\dots, t+T_c\}$, with $T_c \ll T$, at every time $t$. This approach is known as \emph{model predictive control} (MPC) or receding horizon control \cite{borrelli2017predictive}. However, performance degradation may result from choosing $T^c$ too short if no additional precautions are taken. Typically, a terminal cost or a terminal constraint is introduced to compensate for a short control horizon. In the following, we design a receding horizon control policy for the robust FNC problem. 
\begin{proposition} 
\label{proposition:mpc}
Consider the robust FNC problem over the horizon $\{0,1,\dots,T\}$, for a network with controlled merging flows satisfying Assumption \ref{assumption:graph} and with uncertainty sets satisfying Assumption \ref{assumption:uncertainty}. Let $\rho^*(t)$ and $\phi^*(t)$ denote an optimizer of the deterministic FNC \eqref{eq:fnc} for $\bf \omega = \overline \omega$, solved once for the complete horizon $t \in \{0,1,\dots,T\}$. Consider the receding horizon policy based on the convex optimization problem
\begin{equation} 
\label{eq:mpc}
\begin{array}{rrll}
 & \underset{\phi( \tau ), \rho( \tau )}{\text{minimize}} & \sum_{ \tau = t+1 }^{\max \{ t+T_c, T \}} l^\top \rho(\tau)  \\ [2ex]
 & \text{subject to} & \text{FNC conservation law \eqref{eq:ctm_densities} and ramp constraints \eqref{eq:ramp_constraints} for $\omega = \overline \omega$.} \hspace{-2cm} \\[1ex]
 & & \phi_e( \tau ) \leq \underline d_e \big( \rho_e( \tau ) \big) &\forall e \in \E \\[1ex]
 & &  \sum_{i \in \E} \beta_{i,e} \cdot \phi_i(t) \leq \underline{s}_e \big( \rho_e(t) \big) ~& \forall e \notin \mathcal{S} \\[1.5ex]
 & & P L \cdot \big( \rho(t+T_c) - \rho^*(t+T_c) \big) \leq 0 & \forall e \in \E \\[1ex]
 & & \text{Initial state $\rho( t )$ given.}
\end{array}
\end{equation}
The control policy selects the controlled flows $\phi_e(t)$ for $e \in \N$ equal to the values of the minimizer $\phi^*_e(t)$ of the optimization problem. This policy is well-defined\footnote{Here, ``well-defined" means that all optimization problems are feasible, as opposed to feasibility of the trajectory when the computed control inputs are implemented. This distinction is relevant since problem \eqref{eq:mpc} uses a relaxation of the system dynamics. The policy, that is, the optimizer of \eqref{eq:mpc}, is not necessarily unique, but the result holds regardless of which optimizer is used.} , feasible and achieves total cost (TTS) $\C^*_{\text{MPC}} \leq \C^*$. 
\end{proposition}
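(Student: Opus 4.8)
The plan is to mirror the structure of the proof of Theorem~\ref{theorem:robust_counterpart}, but with the global optimal control problem replaced at each time step by the truncated problem \eqref{eq:mpc}, using the optimizer $\rho^*(t),\phi^*(t)$ of the full-horizon deterministic problem \eqref{eq:fnc} for $\omega=\overline\omega$ as a fixed reference trajectory. First I would argue \emph{well-definedness}: at time $t$, with observed state $\rho(t)$, I want problem \eqref{eq:mpc} to be feasible. The key observation is that $\rho^*(t),\phi^*(t)$ itself (restricted to the window) is feasible for \eqref{eq:mpc} \emph{when the true state coincides with $\rho^*(t)$}, and in general the candidate policy keeps the true backlog below the reference backlog. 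So I would set up an induction over $t$: assuming $z(t)=PL\rho(t)\le z^*(t)=PL\rho^*(t)$ (componentwise), I construct a feasible point of \eqref{eq:mpc} by using the affine correction $\phi_e(\tau):=\max\{0,\phi^*_e(\tau)+ p_e^\top L(\rho(\tau)-\rho^*(\tau))/\Delta t\}$ in the TCTM coordinates for the controlled cells over the window, exactly as in Theorem~\ref{theorem:robust_counterpart}. By Lemma~\ref{lemma:tctm_monotonicity} and Observation~\ref{observation:monotonicity}, this correction propagates $z(\tau)\le z^*(\tau)$ forward through the window, so in particular the terminal constraint $PL(\rho(t+T_c)-\rho^*(t+T_c))\le 0$ is met, and the (relaxed) demand/supply/ramp constraints hold because the reference trajectory satisfies them and monotonicity preserves the inequalities. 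This shows \eqref{eq:mpc} is feasible, i.e. the policy is well-defined.

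Next I would establish the \emph{invariant} that the true closed-loop trajectory stays below the reference for \emph{any} uncertainty realization. Let $\phi^{\mathrm{MPC}}_e(t)$ be the flows actually implemented (the first-step minimizer of \eqref{eq:mpc}), and let $z(t)$ be the resulting true backlog under realization $\omega$. I claim $z(t)\le z^*(t)$ for all $t$, by induction. The inductive step has two parts. First, any minimizer of \eqref{eq:mpc} satisfies its terminal constraint, hence $z(t+T_c)\le z^*(t+T_c)$ would follow \emph{if} the optimizer's own predicted trajectory were the true one; but since \eqref{eq:mpc} uses the relaxed dynamics and $\omega=\overline\omega$ internally, I instead need to compare the true one-step update under $\omega$ with the reference. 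The right statement is the single-step monotone bound: given $z(t)\le z^*(t)$, applying the TCTM dynamics with the implemented controlled flows and the true $\omega_t$ gives, via Lemma~\ref{lemma:tctm_monotonicity} (monotonicity in $z$ and in $w$) and Observation~\ref{observation:monotonicity} (the lower-bound FD dominates), $z(t+1)=f(z(t),v^{\mathrm{MPC}}(t),\omega_t)\le f(z^*(t),v^{\mathrm{MPC}}(t),\overline\omega_t)$. To close this I need $f(z^*(t),v^{\mathrm{MPC}}(t),\overline\omega_t)\le z^*(t+1)$, which is exactly where the terminal constraint and the relaxation structure enter: the implemented $v^{\mathrm{MPC}}(t)$ comes from a trajectory that (relative to the reference, over the window) is pointwise-dominated, in particular at the very next step. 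Here I would lean on the fact that \eqref{eq:mpc}'s feasible set, intersected with the requirement that the \emph{first-step} TCTM update not exceed the reference, is nonempty (the reference-correcting policy above lies in it), and that the objective $\sum l^\top\rho(\tau)=\sum c^\top z(\tau)$ with $c\ge 0$ is monotone, so the optimizer cannot be forced above $z^*$ at step $t+1$ — this is the argument to spell out carefully.

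Finally, the \emph{cost bound} is immediate once the invariant is in hand: $\TTS=\sum_{t=0}^T c^\top z(t)\le \sum_{t=0}^T c^\top z^*(t)=\TTS(\phi^*,\overline\omega)=\C^{**}=\C^*$, using $c^\top=\mathbf 1^\top(\I-\rR)\ge 0$ and Theorem~\ref{theorem:robust_counterpart} for the last two equalities; since this holds for every $\omega\in\Omega$, we get $\C^*_{\mathrm{MPC}}\le\C^*$. Feasibility of the implemented closed loop (as opposed to of the optimization problems) follows from the same invariant: the implemented $z(t)\le z^*(t)$ satisfies the ramp constraints \eqref{eq:ramp_constraints} because $z^*$ does and these are monotone, and the controlled-flow constraints hold by construction of $\Pi_t$ together with monotonicity as in the proof of Theorem~\ref{theorem:robust_counterpart}. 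The main obstacle I anticipate is precisely the middle step: making rigorous that an \emph{arbitrary} minimizer of the short-horizon relaxed problem \eqref{eq:mpc} — not just the specific reference-correcting feasible point — produces a first-step update that stays at or below the reference trajectory. The resolution is to show that the \emph{controlled} flows $\phi^*_e(t)$, $e\in\N$, implemented by any minimizer are large enough: because the terminal constraint forces the predicted backlog down by $\tau=t+T_c$ and the objective rewards small backlog, no minimizer can ``hold back'' controlled flow in a way that makes $z(t+1)$ exceed $z^*(t+1)$; equivalently, the set of first-step actions consistent with \eqref{eq:mpc} is contained in the set of actions keeping $z(t+1)\le z^*(t+1)$. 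This containment, argued via monotonicity of both the dynamics and the cost, is the crux of the proof.
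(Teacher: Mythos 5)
Your overall strategy diverges from the paper's in a way that runs into a real obstacle, and you have correctly identified where: the claim that \emph{any} minimizer of the windowed problem \eqref{eq:mpc} keeps the first-step closed-loop state below the reference, $z(t+1)\le z^*(t+1)$. This containment is not established by your argument and is false in general. The terminal constraint only pins down the state at $\tau=t+T_c$, and monotonicity of the objective only yields an inequality on the \emph{sum} of stage costs over the window relative to a feasible candidate; it does not force componentwise domination of intermediate states. Because releasing controlled flow from $e\in\N$ \emph{increases} the backlog of downstream cells through the $P(R-\rR)\phi$ term, a minimizer can legitimately hold back flow at the first step (so that some components of $z(t+1)$ exceed $z^*(t+1)$) and compensate later while still meeting the terminal constraint and achieving lower windowed cost. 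So the closed-loop invariant $z(t)\le z^*(t)$ for all $t$, on which your entire cost bound rests, does not follow. The paper avoids this by never claiming pointwise domination of the closed-loop trajectory by the reference: it runs the induction on the \emph{predicted cost} $\C(t)=\sum_{\tau=0}^{T}c^\top z_t(\tau)$ of a stitched trajectory (past observed states, current window's optimizer, reference tail), and shows $\C^*\ge\C(0)$ and $\C(t)\ge\C(t+1)$ via the standard shifted-candidate argument: the previous window's solution, extended by one step with the reference input, is feasible at the next iteration (here monotonicity of the TCTM and the previous terminal constraint are used to verify the new terminal constraint and the constraints at the appended step), and optimality then bounds the new windowed cost. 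Monotonicity enters only to certify feasibility of the candidate and ordering at the \emph{terminal} step, not at every intermediate step.

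A second, smaller gap: you work throughout in TCTM coordinates as if the policy were defined by the non-relaxed TCTM problem, but \eqref{eq:mpc} is a convex \emph{relaxation} with an added terminal constraint. The paper needs Lemma~\ref{lemma:relaxation} (proved in the appendix by extending the exactness result of the deterministic FNC relaxation to handle the terminal constraint, checking it is concave and monotone in the cumulative-flow reformulation) before it may pass to the TCTM and invoke monotonicity. Without this step, the flows returned by the relaxed problem need not correspond to a realizable TCTM input, and the monotonicity machinery cannot be applied to the policy as actually defined. To repair your proposal you would need both to import that lemma and to replace the state-domination invariant with a cost-decrease (or equivalent) argument.
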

The proof of this proposition is postponed after some preliminary discussion and an additional, auxiliary result. The proposed receding horizon policy requires the one-time solution of a deterministic FNC problem over the complete horizon $\{0,1,\dots,T\}$ in a preliminary step to obtain $\rho^*(t)$. However, only optimization problems with short horizon length $T^c$ need to be solved at every time step at runtime. Just as in Theorem \ref{theorem:robust_counterpart}, the worst-case uncertainty realization $\overline \omega$ and relaxed flow constraints are used in \eqref{eq:mpc}. However, constraints on the terminal state $\rho(t+T_c)$ have been added. Because of these additional constraints, an additional, auxiliary result is needed, to ensure that relaxing the constraints does not change the optimal value of the optimization. Since we will use monotonicity of the TCTM in the proof of Proposition \ref{proposition:mpc} subsequently, we will also state this auxiliary lemma in terms of the TCTM.
\begin{lemma} \label{lemma:relaxation} 
The receding horizon policy defined by \eqref{eq:mpc} is identical to the policy which solves the non-convex problem
\begin{equation} 
\label{eq:mpc_nonconvex}
\begin{array}{rrll}
& \underset{ z(\tau), v(\tau) }{\text{minimize}} & \sum_{\tau=t+1}^{t+T} {\bf 1}^\top (\I - \rR) z( \tau ) \\ [2ex]
 & \text{subject to} & \text{TCTM dynamics \eqref{eq:tctm_dynamics} and constraints \eqref{eq:tctm_constraints} for $\omega = \overline \omega$.} \hspace{-2cm} \\[1ex]
 & & z_e( t+T ) - z^*_e( t+T ) \leq 0 & \forall e \in \E \\[1ex]
 & & \text{Initial state $z( t )$ given.}
\end{array}
\end{equation}
at every time $t$ and applies the controlled flows $\phi_e(t) = \frac{z^*_e(t) - v^*_e(t)}{\Delta t} $. Here, $v^*(t)$ and $z^*(t)$ are part of an (any) optimizer of \eqref{eq:mpc_nonconvex}. 
\end{lemma}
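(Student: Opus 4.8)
\textbf{Proof plan for Lemma \ref{lemma:relaxation}.}
The plan is to establish the equivalence of the convex problem \eqref{eq:mpc} and the non-convex problem \eqref{eq:mpc_nonconvex} by showing that the relaxation of the flow dynamics \eqref{eq:ctm_flows} does not alter the optimal value, and then that the two problems produce the same controlled flows $\phi_e(t)$ for $e \in \N$. First I would invoke Lemma \ref{lemma:equivalence_tctm} to rewrite \eqref{eq:mpc} in the TCTM coordinates $z(\tau) = PL\rho(\tau)$, $v(\tau)$: under this transformation the conservation law and ramp constraints become the TCTM dynamics \eqref{eq:tctm_dynamics} and the relaxed constraints, the terminal constraint $PL(\rho(t+T_c) - \rho^*(t+T_c)) \leq 0$ becomes $z_e(t+T_c) - z^*_e(t+T_c) \leq 0$ componentwise, and the objective $\sum l^\top\rho(\tau)$ becomes $\sum \mathbf{1}^\top(\I - \rR)z(\tau)$ as in the proof of Theorem \ref{theorem:robust_counterpart}. (I would also correct/align the horizon notation so that the two displays use the same truncated horizon.) Thus \eqref{eq:mpc} in TCTM coordinates is exactly the relaxation of \eqref{eq:mpc_nonconvex} in which the nonlinear flow equations are replaced by the inequalities $0 \leq \phi_e \leq \underline d_e(\rho_e(z))$ and $\sum_i \beta_{e,i}\phi_i \leq \underline s_e(\rho_e(z))$.

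The core step is the tightness of this relaxation. Here I would appeal to \cite[Theorem 2]{schmitt2017exact} exactly as in the proof of Theorem \ref{theorem:robust_counterpart}: given any optimizer $(\rho(\tau),\phi(\tau))$ of the relaxed, convex problem, that result produces a solution of the non-relaxed deterministic FNC (for $\omega=\overline\omega$) with \emph{identical} controlled flows $\tilde\phi_e(\tau) = \phi_e(\tau)$ for all $e \in \N$, and with a pointwise no-larger density trajectory in the backlog variables, i.e. $\tilde z(\tau) \leq z(\tau)$. The one extra ingredient compared to Theorem \ref{theorem:robust_counterpart} is the presence of the terminal constraint: I must check that the non-relaxed trajectory $\tilde z$ still satisfies $\tilde z_e(t+T_c) \leq z^*_e(t+T_c)$. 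But this is immediate from $\tilde z(\tau) \leq z(\tau)$ together with the fact that the relaxed optimizer already satisfies $z_e(t+T_c) \leq z^*_e(t+T_c)$; monotonicity of the terminal constraint (it is of the form used in Lemma \ref{lemma:tctm_monotonicity}) is what makes the componentwise inequality propagate. Since the objective $\sum \mathbf{1}^\top(\I-\rR)z(\tau)$ is monotone in $z$ (because $c_e = 1 - \sum_{i\notin\N}\beta_{e,i} \geq 0$, as in the proof of Theorem \ref{theorem:robust_counterpart}), the non-relaxed trajectory $\tilde z$ has cost no larger than that of the relaxed optimizer; conversely every feasible point of the non-relaxed problem is feasible for the relaxation with the same cost, so the optimal values coincide.

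It remains to argue that the \emph{controlled flows} applied by the two policies agree, not merely the optimal values. Given a convex optimizer with flows $\phi^*_e(t)$ for $e\in\N$, the construction above yields a non-convex optimizer realizing the same $\phi^*_e(t)$; conversely any non-convex optimizer is feasible (hence optimal) for the convex problem with the same controlled flows. Therefore the set of controlled-flow vectors $(\phi_e(t))_{e\in\N}$ attainable as the first-stage action of an optimizer is the same for both problems, and in TCTM variables each such action corresponds to $\phi_e(t) = \frac{z^*_e(t) - v^*_e(t)}{\Delta t}$ (nonnegativity being automatic, as noted after \eqref{eq:transformation}), which is the form stated in the lemma. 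I expect the main obstacle to be the bookkeeping around the terminal constraint — specifically, making precise that the trajectory returned by \cite[Theorem 2]{schmitt2017exact} not only matches the controlled flows but also inherits feasibility of the added terminal inequality — and, more cosmetically, reconciling the horizon bounds ($t+T_c$ versus $t+T$, and $\max$ versus $\min$) between \eqref{eq:mpc} and \eqref{eq:mpc_nonconvex} so the two statements genuinely describe the same policy.
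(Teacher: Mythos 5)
Your proposal follows the same skeleton as the paper's proof: pass to the non-relaxed counterpart of \eqref{eq:mpc} via Lemma \ref{lemma:equivalence_tctm}, invoke the tightness result \cite[Theorem 2]{schmitt2017exact} to transfer an optimizer of the relaxed problem to a feasible point of the non-relaxed one with identical controlled flows, and then check that the added terminal constraint survives this transfer. The one substantive difference is how that last check is done. The paper works inside the cumulative-flow reformulation of \cite{schmitt2017exact}: it rewrites $PL\big(\rho^*(t+T_c)-\rho(t+T_c)\big)\geq 0$ as an affine function of $\Phi(t+T_c)$ and $\hat\Phi(t+T_c)$, verifies that this function is monotone (and, being affine, concave) in those variables, and then invokes \cite[Corollary 1]{schmitt2017exact}, which extends the tightness result to problems with additional concave, state-monotone constraints. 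You instead argue via trajectory domination, $\tilde z(\tau)\leq z(\tau)$, so that the upper-bound terminal constraint is inherited. These are the same argument in different coordinates (domination of the backlogs is equivalent to domination of the uncontrolled cumulative flows at fixed controlled flows), and both correctly explain why equality terminal constraints would \emph{not} be admissible. The only caution is that the domination property you use is not part of \cite[Theorem 2]{schmitt2017exact} as it is cited elsewhere in the paper (which only guarantees matching controlled flows); you would either need to extract $\tilde z\leq z$ explicitly from that theorem's construction, or do as the paper does and lean on the ready-made \cite[Corollary 1]{schmitt2017exact} after exhibiting the terminal constraint as monotone and concave in the reformulated variables. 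Your observation about the mismatched horizon notation ($t+T_c$ versus $t+T$, and the $\max$ that should be a $\min$) is a fair catch and does not affect the argument.
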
 
The proof uses a variant of \cite[Theorem 2]{schmitt2017exact} and is provided in \ref{appendix:relaxation2}. It should be emphasized that the result does \emph{not} allow for arbitrary terminal constraints, in particular equality constraints. We are now ready to prove Proposition \ref{proposition:mpc} with arguments reminiscent of standard proofs for performance and recursive feasibility in MPC.

\begin{proof} 
According to Lemma \eqref{lemma:relaxation}, the policies defined by problems \eqref{eq:mpc_nonconvex} and \eqref{eq:mpc} are identical. Therefore, we can analyze the performance of the policy based on solving \eqref{eq:mpc_nonconvex} and make use of monotonicity of the TCTM. First, we define the \emph{predicted trajectory} $z_t( \tau )$ at time $t$. For $0 \leq \tau \leq t$, the predicted trajectory lies in the past and it equals the actual, observed trajectory. For the control horizon $t < \tau \leq t+T^c$, the predicted trajectory is defined to be equal to the solution obtained by solving problem \eqref{eq:mpc_nonconvex} at iteration $t$. Finally, $z_t(\tau) := z^*(\tau)$ for $t+T^c < \tau \leq T$.\footnote{Note that as opposed to the reference trajectory $z^*(\tau)$, the predicted trajectory is not necessarily feasible in the TCTM dynamics, due to the way in which $z_t(t+T^c)$ and $z_t(t+T^c+1)$ are defined.} With the help of the predicted trajectory, we can define the predicted cost as
\begin{equation*}
\C(t) := \sum_{\tau = 0}^{ T } {\bf 1}^\top (\I - \rR) z_t(\tau) , 
\end{equation*}
with ${\bf 1}^\top (\I - \rR) =: c^\top \geq 0$, as stated before. We employ induction to show that the predicted cost never increases, that is, $\C^* \geq \C(0)$ and $\C(t) \geq \C(t+1)$. At $t = 0$, the candidate solution $\hat z( \tau ) := z^*( \tau )$ for $0 \leq \tau \leq T$ is feasible in the optimization problem \eqref{eq:mpc_nonconvex}, since the reference trajectory $z^*(\tau)$ is feasible by Assumption. This implies that the solution $z_0( \tau )$ to \eqref{eq:mpc_nonconvex} realizes a cost over the horizon $0 \leq \tau \leq T_c$ smaller than the cost incurred by the reference trajectory. 
For $t < \tau \leq T$, we have that $z_0(\tau) = z^*(\tau)$. The start of the induction
\begin{align*}
\C(0) &= \sum_{\tau = 0}^{T} c^\top z_0(\tau) \leq \sum_{\tau = 0}^{ T} c^\top z^*(\tau) = \C^*
\end{align*}
follows. 

\newcommand*\mystrut[1]{\vrule width0pt height0pt depth#1\relax}

Consider now time $t > 0$ and the candidate solution $\hat z(\tau ) = z_{t-1}(\tau )$ for $t \leq \tau \leq t+T-1$. For this interval, feasibility of the candidate solution in the optimization \eqref{eq:mpc_nonconvex} at time $t$ follows from feasibility of the solution to the optimization \eqref{eq:mpc_nonconvex} in the previous time step. The terminal state $\hat z(t+T)$ of the candidate trajectory is obtained by simulating the TCTM for one step, using the input of the reference trajectory $v(t+T-1) = v^*(t+T-1)$, which implies feasibility with respect to the TCTM dynamics \eqref{eq:tctm_dynamics}. Furthermore, $\hat z(t+T) \leq z^*(t+T)$, since $\hat z(t+T-1) \leq z^*(t+T-1)$ due to the terminal constraint in the previous time step and monotonicity of the TCTM. This implies that the candidate trajectory satisfies the terminal constraint of \eqref{eq:mpc_nonconvex}. Also, feasibility with respect to the TCTM constraints \eqref{eq:tctm_constraints} for the terminal state $\tau = t+T$ follows by monotonicity of these constraints since
\begin{align*}
g \big( z_t( t+T ) , v^*( t+T ), \omega_{t+T} \big) \leq g \big( z^*( t+T ) , v^*( t+T ), \overline \omega_{t+T} \big) \leq 0 \quad \forall ~\omega_{t+T} \in \Omega_{T+t}.
\end{align*}
We conclude that the candidate trajectory is feasible in the optimization problem at iteration $t$, which implies that the cost achieved by solving problem \eqref{eq:mpc_nonconvex} at time $t$ improves over the cost in the candidate trajectory for the control horizon, that is, $\sum_{\tau = t}^{t+T-1} c^\top z_t(\tau ) \leq \sum_{\tau = t}^{t+T-1} c^\top z_{t-1}(\tau )$. We can now verify that
\begin{align*}
\C(t) &~=~ \sum_{\tau = 0}^{t-1} c^\top \underbrace{\mystrut{3ex}  z_{t}(\tau) }_{ \hspace{-1cm}  = z_{t-1}(\tau) \hspace{-1cm}  }
 ~+ \underbrace{ \sum_{\tau = t}^{t+T-1} c^\top z_{t}(\tau ) }_{ \leq \sum_{\tau = t}^{t+T-1} c^\top z_{t-1}(\tau) }
 ~+ c^\top \underbrace{ \mystrut{3ex} z_t(t+T) }_{ \hspace{-1cm} \mystrut{3ex} \leq z^*(t+T) = z_{t-1}(t+T)  \hspace{-1cm} }
 ~+ \sum_{\tau = t+T+1}^{\mathcal{T}} c^\top \underbrace{ \mystrut{3ex} z^*(\tau) }_{\hspace{-1cm}  = z_{t-1}(\tau) \hspace{-1cm}  } 
 ~\leq~ \C(t-1) ,
\end{align*}
where we use monotonicity of the TCTM cost in addition to previous, intermediate results. For $\tau > T - T^c$, the horizon of the optimization \eqref{eq:mpc_nonconvex} is truncated, but the same arguments can be used to certify a decrease of the predicted cost.
\end{proof}

We will evaluate the performance of such a receding horizon controller in the following chapter. In particular, we will compare its performance to the subgame-perfect solution and to a ``naive" receding horizon control approach without any terminal cost.

\section{Numerical study} \label{sec:numerical} 

\begin{figure} 
	\centering
	\includegraphics[width = \textwidth]{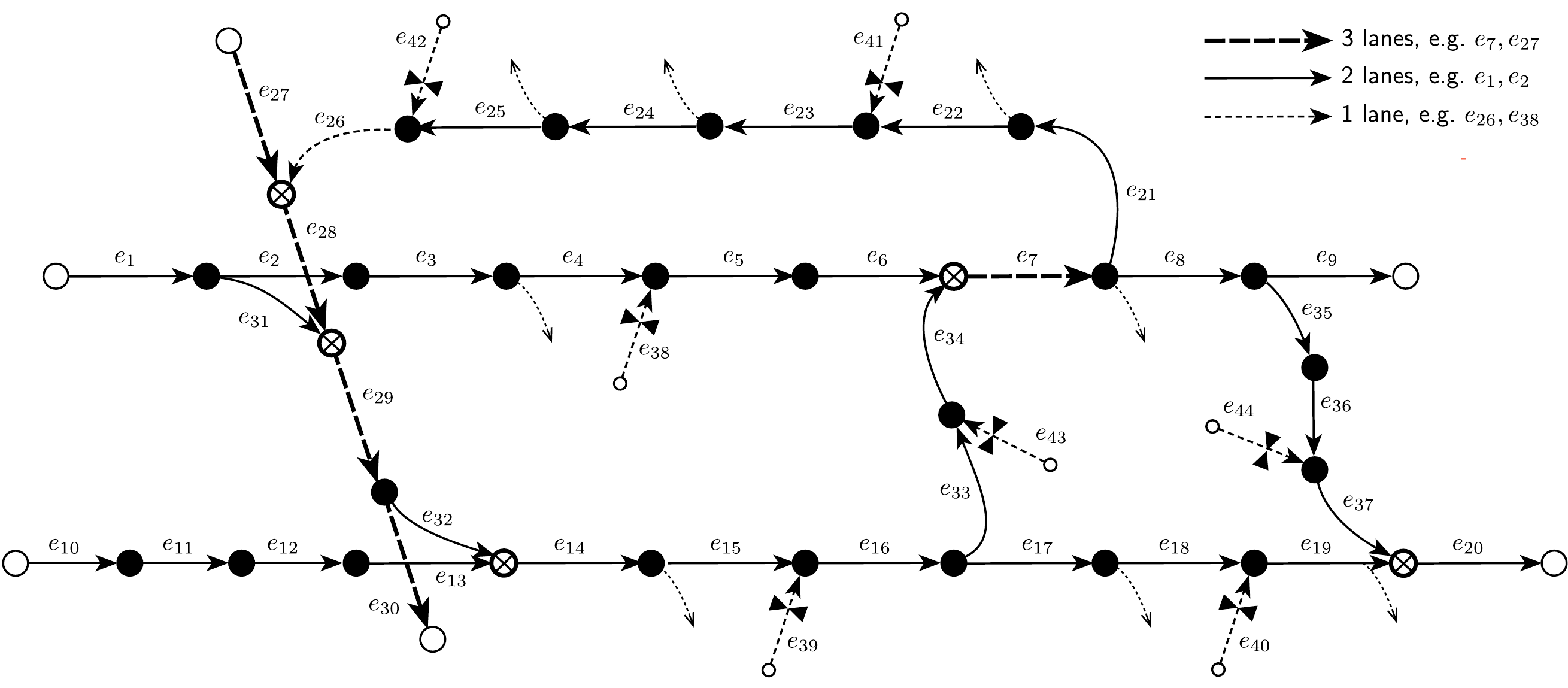}
	\caption{Network topology used in the numerical study. The capacity of each individual cell is proportional to the number of lanes. Depicted is the worst-case capacity of cells. We will also study cases when the capacity of cells $e_{10-16}$, $e_{21-26}$ and $e_{33-34}$ is increased in comparison to this depiction.}
	\label{fig:topology}
\end{figure}

In this section, we aim to verify the theoretical results via numerical simulations. To this end, we consider the artificial freeway network depicted in Figure \ref{fig:topology}. The network comprises $44$ cells, seven of which are metered onramps, five mainline merging junctions, five mainline FIFO-diverging junctions and eight offramp FIFO-diverging junctions. In accordance with the assumptions, the mainline merging junctions upstream of $e_7, e_{14}, e_{20}, e_{28}$ and $e_{29}$ are controlled\footnote{It turns out that for the considered demand pattern, cells $e_{14}$, $e_{28}$ and $e_{29}$ do not congest in the optimal solution and hence, the junctions upstream of these cells do not require active flow control.} and all onramps $e_{38-44}$ are used for ramp metering. We also simulate the uncontrolled network for comparison, and in this case, we assume that both onramp and mainline merging junctions are described by the proportional-priority merging model. The turning rates for the mainline FIFO-diverging junctions are $\beta_{2,1} = 0.8$, $\beta_{31,1} = 0.2$, $\beta_{8,7} = 0.6$, $\beta_{21,7} = 0.2$, $\beta_{9,8} = 0.5$, $\beta_{35,8} = 0.5$, $\beta_{17,16} = 0.4$, $\beta_{33,16} = 0.6$, $\beta_{30,29} = 0.6$ and $\beta_{32,29} = 0.4$. At every offramp, $20\%$ of the flow leaves the network via the corresponding offramp, so e.g.\ $\beta_{4,3} = 1-0.2 = 0.8$. In the worst-case uncertainty realization, we assume that every lane is described by the same piecewise-affine fundamental diagram, with demand $\underline d( \rho(t) ) = \min \{ v \cdot \rho(t), F \}$ and supply $\underline s( \rho(t) ) = \min \{ F, ( \bar \rho - \rho(t) ) \cdot w \}$. The free-flow speed is $v = 120$km/h, the congestion-wave speed $w = 30$km/h, the lane capacity $F = 2000$cars/h and the traffic jam density $\bar \rho = 120$cars/km per lane. The number of lanes of each cell is displayed in Figure \ref{fig:topology}. 
Every cell is $l_e = 0.5$km long and the sampling time $\Delta t = 15$sec is chosen in accordance with Assumption \ref{assumption:fd}. External traffic demand arrives in the source cells. The maximal external demand $\overline w_{1}(t)$ arriving in cell $e_1$ is displayed in Figure \ref{fig:sample_both} (together with other uncertainty realizations). The maximal external demand arriving in cell $e_{10}$ is $\overline w_{10}(t) = \overline w_1(t)$, whereas $\overline w_{27}(t) = \overline w_{38-44}(t) = \frac{1}{2} \cdot \overline w_{1}(t)$. 

\begin{figure} 
	\centering
	\begin{subfigure}[b]{0.49\textwidth}
		\includegraphics[width = \textwidth]{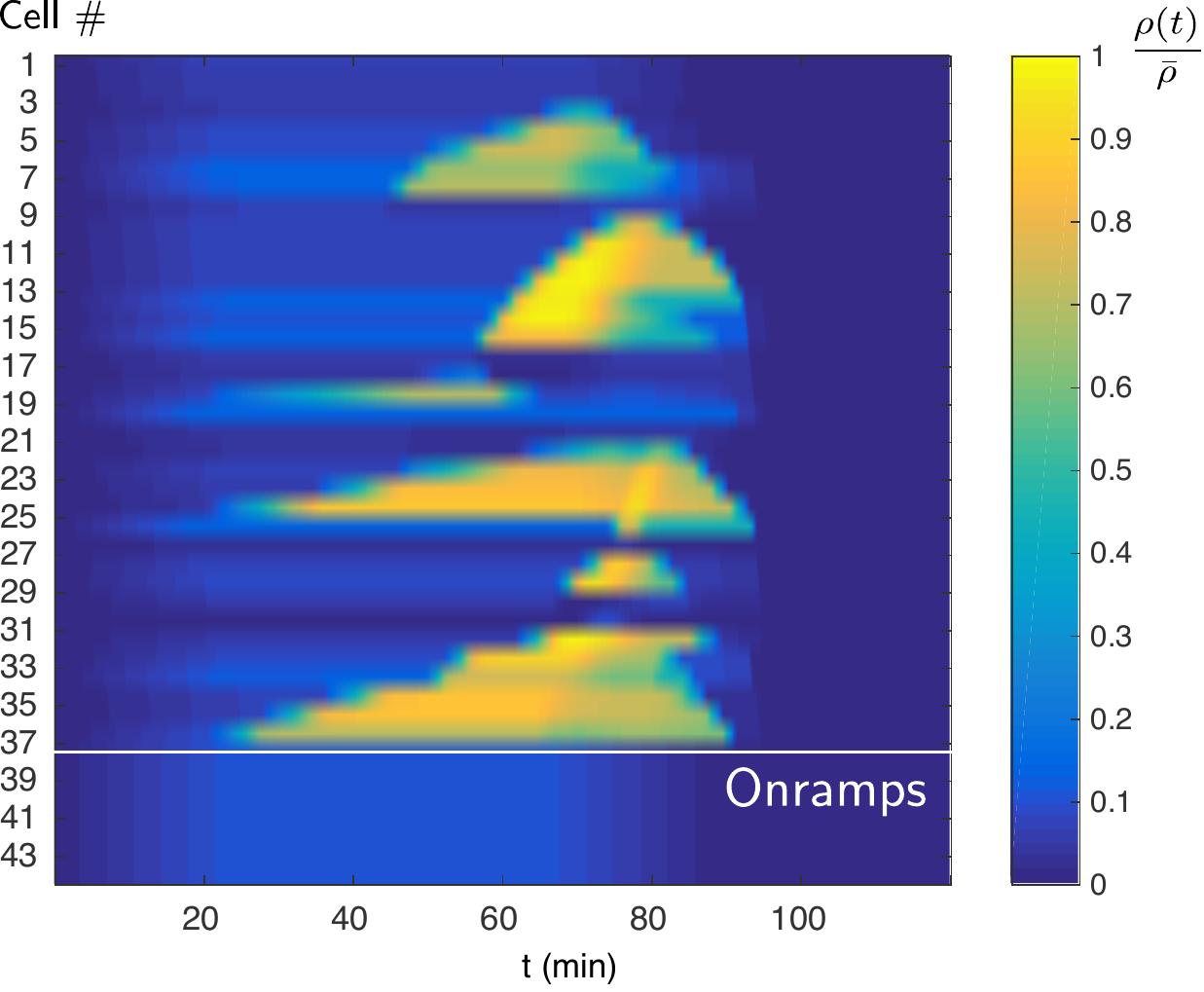}
		\caption{Evolution of the uncontrolled network for the worst-case uncertainty realization.}
		\label{fig:contour_openloop}
	\end{subfigure} ~ 
	\begin{subfigure}[b]{0.49\textwidth}
		\includegraphics[width = \textwidth]{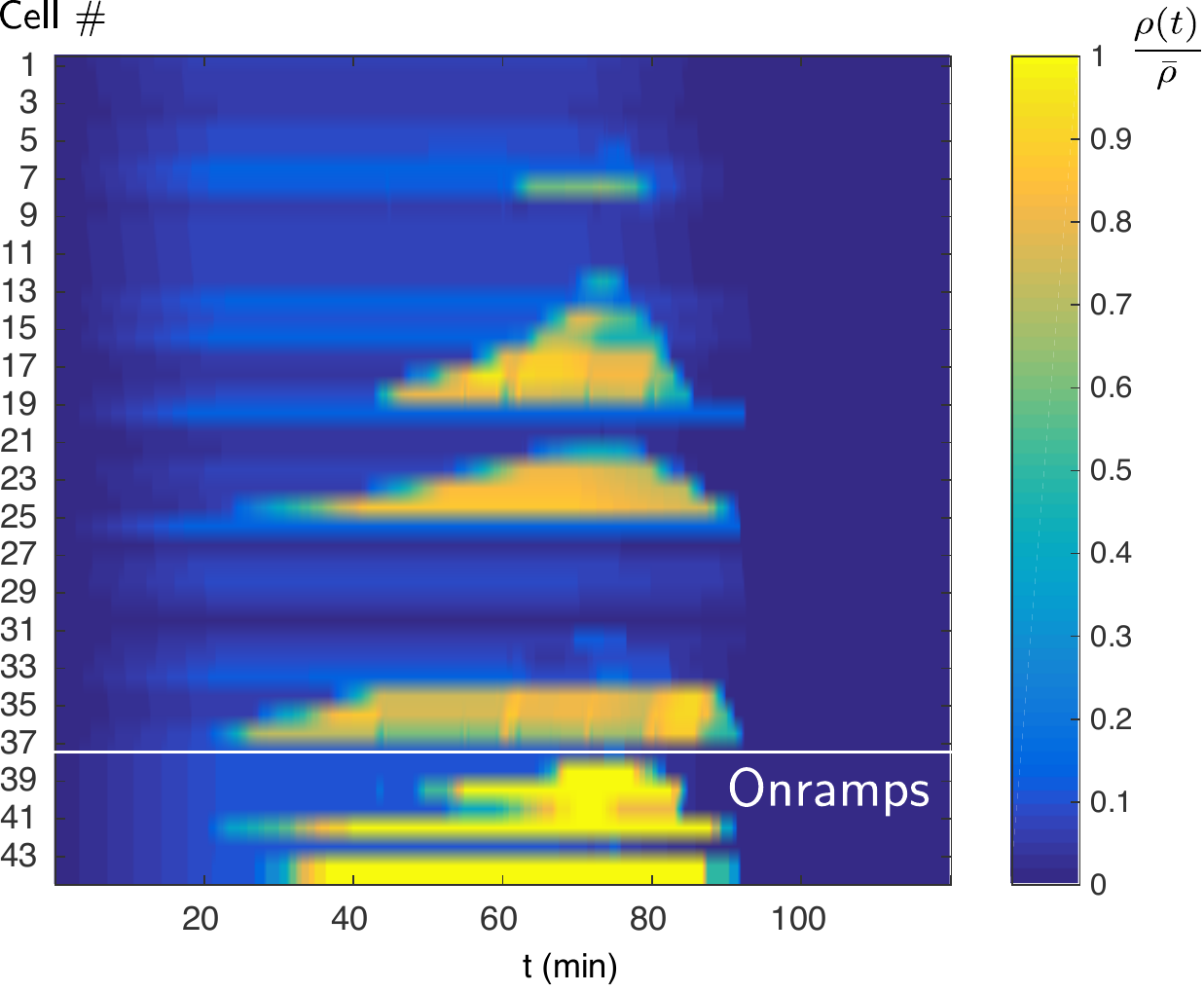}
		\caption{Optimal evolution of the controlled network for the worst-case uncertainty realization.}
		\label{fig:contour_optimal}
	\end{subfigure} 
\caption{Optimal control of merging flows achieves $\TTS^* = 590.0$h, improving over $\TTS_{\text{ol}} = 742.5$h for the uncontrolled network by keeping FIFO diverging junctions uncongested.}
	\label{fig:contour}
\end{figure}

The evolution of the uncontrolled network, for the worst-case uncertainty realization, is depicted in Figure \ref{fig:contour_openloop}. It turns out that cells $e_7$, $e_{16}$, $e_{20}$ and $e_{26}$ are bottlenecks which lead to the formation of congestion queues. In the uncontrolled, ``open-loop" case, $\TTS_{\text{ol}} = 742.5$h results. Control of merging flows allows for better performance. The optimal system evolution in a deterministic setting, where the uncertainty realization is equal to the worst-case uncertainty and known in advance, is depicted in Figure \ref{fig:contour_optimal}. Optimal control of merging flows achieves $\TTS^* = 590.0$h, which corresponds to a reduction of $\frac{\TTS_{\text{ol}} - \TTS^*}{\TTS_{\text{ol}}} = 20.5\%$ in comparison to the uncontrolled case. The contour plots reveal that this is achieved by ramp metering of cells $e_{39-42}$ and $e_{44}$ and prioritization of $\phi_{37}(t)$ over $\phi_{19}(t)$, which keeps cells $e_{4-8}$ (and cells $e_{25-22}$) uncongested and thereby increases flows through the FIFO-diverging junctions downstream of cells $e_{3}$, $e_{7}$, $e_{8}$, $e_{23}$ and $e_{24}$. In this example with piecewise-affine demand and supply functions, the optimal solution can be computed by solving a Linear Program (LP). The corresponding LP with 68684 variables and 137280 constraints is solved by Gurobi \cite{gurobi} in 40sec.\footnote{The solution was found using a 2013 MacBook Pro with 2.3GHz Intel i7 processor. Gurobi was interfaced via Matlab.} In the following, we will introduce uncertainty for external demands and fundamental diagrams and explore the impact on control performance.

\subsection{Verification of monotonicity} 

In this section, we aim to verify Theorem \ref{theorem:robust_counterpart} numerically. Instead of randomly sampling from the uncertainty sets, we use Corollary \ref{corollary:monotonicity_objective} and define a finite set of uncertainty realizations in a systematic manner, and compare the resulting TTS.
 
\begin{figure} 
	\centering
	\begin{minipage}{.31\textwidth}
		\begin{subfigure}[b]{\textwidth}
			\includegraphics[width = \textwidth]{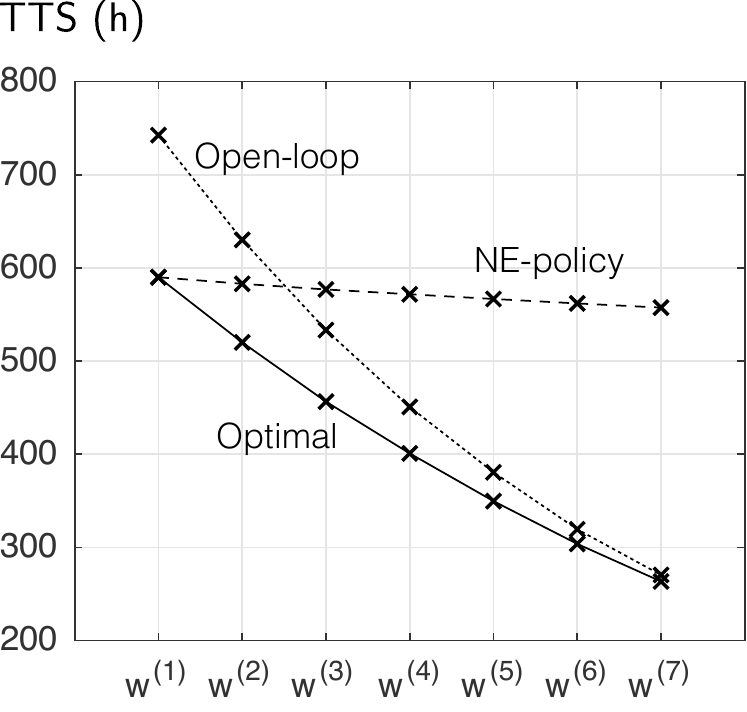}
			\caption{TTS for different realizations of the uncertain external demand, with $w_e^{(1)}(t) = \overline w_e(t)$ equal to the worst-case realization.}
			\label{fig:tts_externaldemand}
		\end{subfigure}
	\end{minipage} ~~
	\begin{minipage}{.31\textwidth}
		\begin{subfigure}[b]{\textwidth}
        			\includegraphics[width =\textwidth]{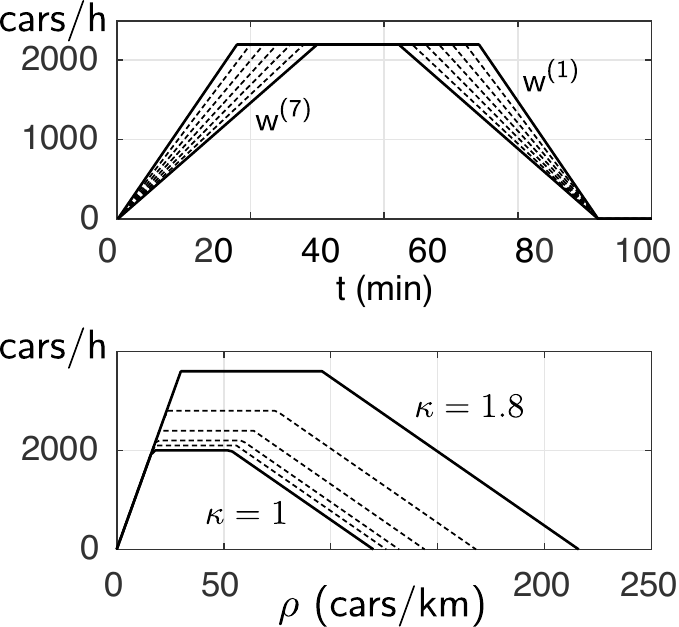}
       			\caption{Realizations of the external demand $w_1(t)$ (top) and of the fundamental diagram of a single lane (bottom).}
       			\label{fig:sample_both}
	       	\end{subfigure}
	\end{minipage} ~~
	\begin{minipage}{.31\textwidth}
		\begin{subfigure}[b]{\textwidth}
			\includegraphics[width = \textwidth]{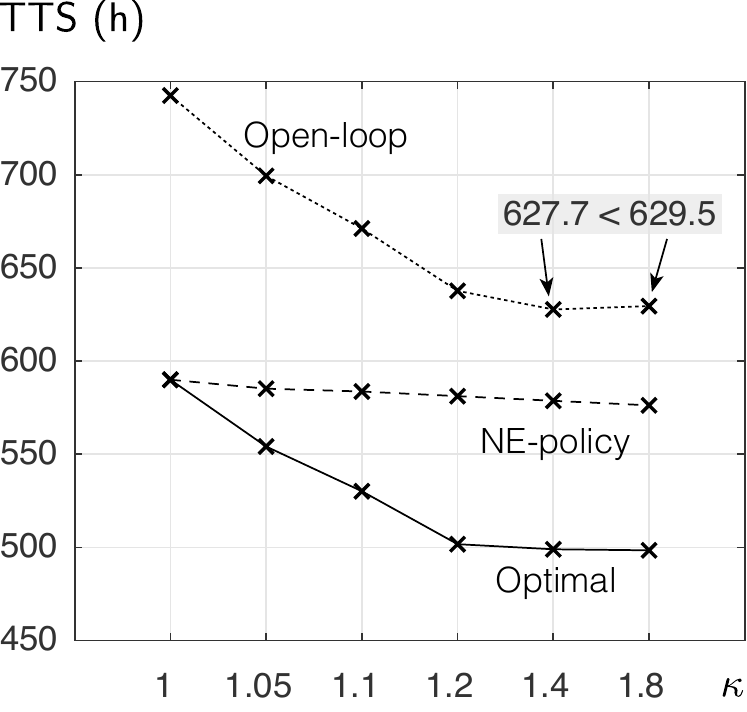}
			\caption{TTS for different realizations of the uncertain FD for cells $10-16$, $21-26$ and $33-34$.\\}
			\label{fig:tts_fd}
		\end{subfigure}
	\end{minipage}
%
\caption{For networks with controlled merging flows, the optimal TTS decreases if external demands decrease or demand and supply functions increase. The same holds for the NE policy, but it is not necessarily true for the uncontrolled network, as demonstated in Figure \ref{fig:tts_fd}.}
	\label{fig:correctness}
\end{figure} 

For the external demand, we define uncertainty realizations $w_e^{(k)}(t)$ for $k \in \{1,2,\dots,7\}$ as depicted in Figure \ref{fig:sample_both}. The realization $w_e^{(1)}(t) = \overline w_e(t)$ is the worst-case external demand and the remaining uncertainty realization are ordered in the sense that $w_e^{(1)}(t) \geq w_e^{(2)}(t) \dots \geq w_e^{(7)}(t)$, for all $e \in \E$ and all $0 \leq t \leq 2$h. This implies $w^{(i)} \in \W$. For now, we refrain from introducing uncertainty in the fundamental diagram and study the effect of the external demands on performance. We first solve the robust counterpart of the FNC problem with controlled merging junctions by employing Theorem \ref{theorem:robust_counterpart}, which reduces the robust control problem to a deterministic problem (in fact, the LP whose optimal solution is depicted in Figure \ref{fig:contour_optimal}). Thus we obtain an optimal reference trajectory $\rho^*_e(t)$ with flows $\phi^*_e(t)$ (or equivalently, a trajectory $z^*(t)$, $v^*(t)$ in the TCTM) for the worst-case uncertainty realization. Lemma \ref{lemma:tctm_monotonicity} implies that if the candidate policy $\phi_e(t) := \max \left\{ 0, \frac{z_e(t) - v_e^*(t)}{\Delta t} \right\}$, with $v_e^*(t) := z_e^*(t) - \Delta t \cdot \phi_e^*(t)$, is used, the evolution of the TCTM is monotone in the external demand. This means that the ordering of the trajectories 
 is preserved, for the ordered inputs $w^{(k)}$. Furthermore, the $\TTS$ decreases if the external demand decreases. We verify that trajectories are ordered numerically, but only depict the influence of the external demand on the $\TTS$ in Figure \ref{fig:tts_externaldemand}. It can be seen that for the NE policy, the $\TTS$ decreases as the external demand decreases, as predicted. For comparison, we also depict the $\TTS$ obtained for the uncontrolled system and the optimal $\TTS$, for uncertainty realizations known in advance. Corollary \ref{corollary:monotonicity_objective} states that the optimal $\TTS$ for networks with controlled merging junctions decreases if external demand decreases, and this is indeed the case in this example.\footnote{Note however, that the ordering of the trajectories is \emph{not} necessarily preserved under optimal control.} In this particular instance, the $\TTS$ obtained for the uncontrolled system also decreases if external demand decreases, however, this is not necessarily the case for other networks or demand patterns, as demonstrated earlier in Example \ref{example:ex1}. One important observation can be made from the comparison of the performance of the optimal solution, the uncontrolled network and the NE policy: The performance of the NE policy becomes vastly suboptimal if the actual external demand deviates significantly from the worst-case external demand. This suggests that the NE policy is far from subgame-perfect, as pointed out earlier. This is not a problem, however, as the main purpose of the NE policy is to aide in the proof of Theorem  \ref{theorem:robust_counterpart}, where it helps to ensure monotonicity of the TCTM. Better performance can be achieved by receding horizon strategies, as we will see in Section \ref{sec:mpc_study}.

Before exploring receding horizon control, we study the effect of uncertainty in the fundamental diagram. To this end, we assume that the external demand is equal to the worst-case external demand, and introduce different fundamental diagrams for cells $e_{10-16}$, $e_{21-26}$ and $e_{33-34}$. The capacity of those cells is increased in comparison to the worst-case fundamental diagram. This choice leaves the bottlenecks at cells $e_7$ and $e_{20}$ in place, but affects and potentially removes the bottlenecks at cells $e_{16}$ and $e_{26}$ (and prevents cells immediately upstream of the latter cells from becoming new bottlenecks). In particular, we consider a scaling factor $\kappa \geq 1$ and define demand and supply functions of the aforementioned cells as $d( \rho(t) ) = \min \{ v \cdot \rho(t), \kappa F \}$ and supply $s( \rho(t) ) = \min \{ \kappa F, (\kappa \bar \rho  - \rho(t)) \cdot w \}$. Demand and supply functions of other cells remain unchanged. The fundamental diagram of a single lane, for different values of $\kappa$, is depicted in Figure \ref{fig:sample_both}. Again, we compare the performance of the uncontrolled system, the NE policy and the optimal solution (assuming perfect knowledge of all uncertainty realizations in advance) for different values of $\kappa$. It is important to note that every demand function considered in this example is non-decreasing and every supply function is non-increasing. Hence, they all satisfy Assumption \ref{assumption:fd} and for any $\kappa \geq 1$, the corresponding TCTM is monotone in $z(t)$. In turn, this implies that $\TTS$ decreases as $\kappa$ increases, for both the NE policy and the optimal solution. The results are depicted in Figure \ref{fig:tts_fd} and confirm these predictions. For comparison, we also plot the $\TTS$ achieved in the uncontrolled system. It turns out that $\TTS$ in the uncontrolled network \emph{increases} if $\kappa$ is increased from $1.4$ to $1.8$, demonstrating again that Corollary \ref{corollary:monotonicity_objective} does not extend to networks with uncontrolled merging junctions.

While the numerical studies performed so far confirm the theoretical results, they also reveal that even though the worst-case performance bound is satisfied, the NE policy can perform much worse than the optimal solution, or even the uncontrolled network, if uncertainty realizations other than the worst-case are encountered. This extreme conservativeness makes the NE policy impractical. 
In the next section, we will therefore study the performance of receding horizon policies that retain the worst-case performance guarantees of the NE policy, but approximate the optimal solution more closely.

\subsection{Performance of receding-horizon control} 
\label{sec:mpc_study}

In this section, we aim to analyze the performance of receding horizon policies for the robust FNC problem. For the most part, we consider policies with a terminal cost as described in Proposition \ref{proposition:mpc}, for which the worst-case performance guarantees hold. For such a policy, the control horizon $T^c$ needs to be chosen. In general, this choice is a trade-off between computational effort required to solve the optimization problem at every time step and performance, which tends to improve for longer horizons (altough there is no guarantee that performance will improve if a longer horizon is chosen). The robust FNC \eqref{eq:robust_counterpart} itself is defined as a finite horizon problem, with horizon length $T$. For a control horizon length $T^c = T$, we obtain the subgame-perfect policy described at the end of Section \ref{sec:main_result} (recall that the control horizon at time $t$ is truncated if $t + T^c > T$). In addition to choosing the control horizon, one can also choose to not re-optimize after every sampling interval $\Delta t$. Instead, one might choose to only re-optimize every $k \in \Z^+$ time steps.\footnote{If one choses to do so, then the control horizon has to be an integer-multiple of the time between re-optimization, to ensure that the worst-case performance guarantee from Proposition \ref{proposition:mpc} continues to hold. The intuition is that in this case, one could define a new system, with sampling time $k \cdot \Delta t$, and perform the analysis of the receding horizon controller for this new system.}

\begin{figure} 
	\centering

	\begin{subfigure}[b]{0.32\textwidth}
		\includegraphics[width = \textwidth]{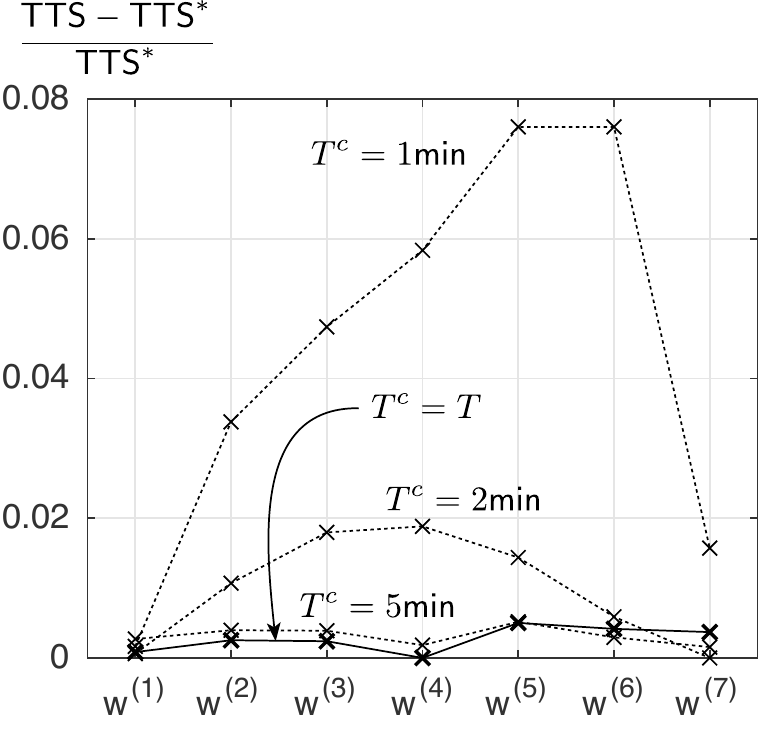}
		\caption{Suboptimality as a function of the external demand, for different horizon lengths.}
		\label{fig:horizon_externaldemand}
	\end{subfigure} ~ 
	\begin{subfigure}[b]{0.32\textwidth}
		\includegraphics[width = \textwidth]{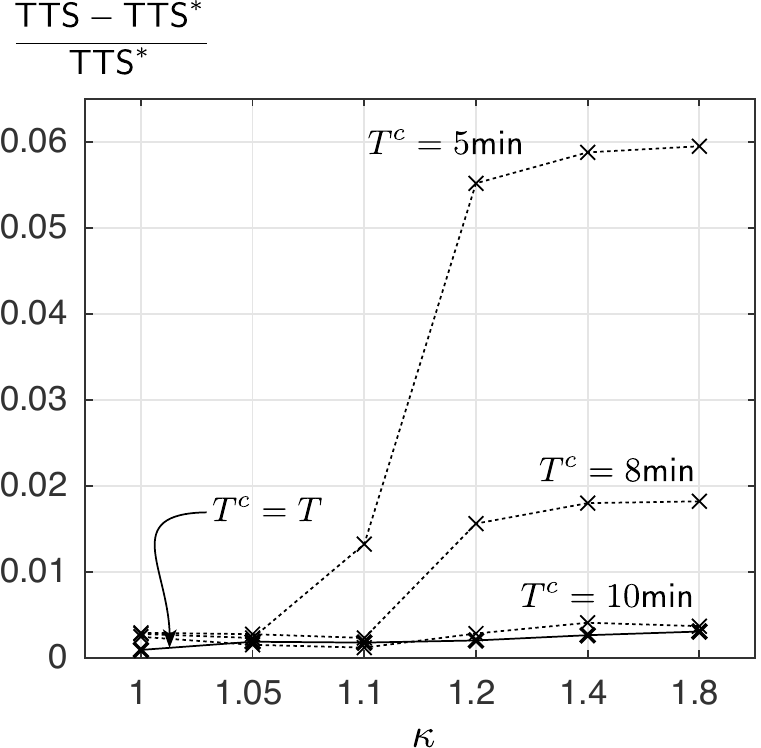}
		\caption{Suboptimality as a function of the scaling factor $\kappa$, for different horizon lengths.}
		\label{fig:horizon_fd}
	\end{subfigure} ~
	\begin{subfigure}[b]{0.32\textwidth}
		\includegraphics[width = \textwidth]{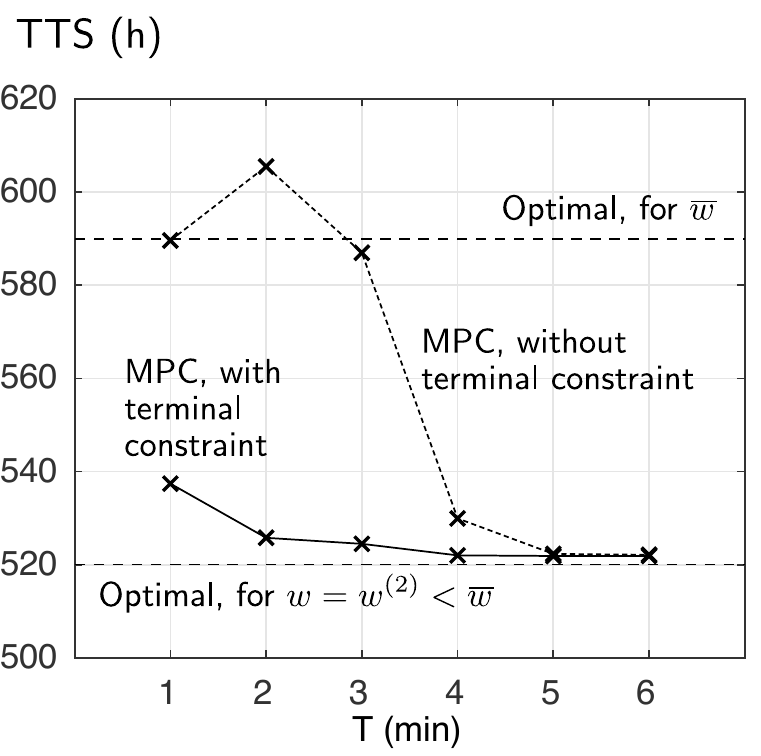}
		\caption{TTS as a function of horizon length of the receding horizon controller, for the special case $w^{(2)}$.}
		\label{fig:tts_horizon}
	\end{subfigure} 
\caption{Suboptimality tends to decrease with increasing horizon length of the receding horizon controller, up to a point. For uncertain external demand, the performance for $T^c \geq 5$min is similar to the subgame-perfect solution ($T^c = T$), see Figure \ref{fig:horizon_externaldemand}. For uncertain fundamental diagrams, this is the case for $T^c \geq 10$min, see Figure \ref{fig:horizon_fd}. In both cases, a receding horizon policy with terminal constraint is used. A policy without terminal constraint may increase TTS over the optimal TTS for the worst-case uncertainty realization (Figure \ref{fig:tts_horizon}), while policies with terminal constraint are guaranteed to be improving.}
	\label{fig:horizon}
\end{figure}

In this part of the numerical study, we evaluate the performance of receding horizon policies with different control horizon lengths for the robust FNC problem, for the network and the uncertainty realizations studied in the previous section. In all cases, we choose to re-optimize every $k =4$ time steps. For $\Delta t = 15$sec, this implies that we re-optimize every $1$min. Note that using a sampling time $\Delta t \leq 15$sec is required for this particular traffic network, due to the stability constraint encountered when discretizing the traffic model, according to Assumption \ref{assumption:fd}. In addition, we assume that at time $t$, the uncertainty realizations $\omega_\tau$ for the time until the next re-optimization, that is, for $t \leq \tau < t + 60$sec, are known without uncertainty. This additional assumption helps to prevent unrealistic underutilization of the network. For example, if a conservative estimate for the capacity of a merging junction is used even for the near-future (in this case, the next $60$sec), then the controlled flows will always be restricted to serving a total flow less or equal to this worst-case capacity. This behavior is unrealistic, however, since in practice, flow controls would  be translated into traffic light duty-cycles before being implemented. If local traffic happens to flow faster than in worst-case predictions, then larger flows are automatically being served during green times. Also, it is realistic to assume that short-term predictions are much less uncertain than long term predictions.

The results for different realizations of the external demand are depicted in Figure \ref{fig:horizon_externaldemand}. This figure uses the same horizontal axis as Figure \ref{fig:tts_externaldemand}, but instead of displaying the TTS achieved by the different policies, the relative suboptimality $\frac{\TTS - \TTS^*}{\TTS^*}$ is shown. As in the previous section, the optimal solution refers to the solution of the deterministic FNC problem with perfect anticipation of the values of the ``uncertain" quantities. In this particular instance, even short control horizons tend to perform well. In particular, for $T^c \geq 5$min, the performance of the receding horizon policy is similar to the one of the subgame-perfect policy ($T^c = T$). The performance deterioration in comparison to the optimal solution with perfect information is less than $0.5\%$ for such horizon lengths. Similar results are obtained for uncertain demand and supply functions. Figure \ref{fig:horizon_fd} reveals that somewhat longer control horizons are required to achieve comparable performance, but for $T^c \geq 10$min, the performance of the receding horizon policy is again similar to the subgame-perfect policy and a performance deterioration of less than $0.5\%$ in comparison to the optimal solution is achieved. 

However, the receding horizon policies with short horizons and the subgame-perfect policy differ in terms of their computational effort. In particular, solving optimization problems with horizon $10$min takes $0.174$sec on average and at most $0.422$sec\footnote{Solver time reported by Gurobi, again on a 2013 MacBook Pro with 2.3GHz Intel i7 processor.}. In terms of computational effort, real-time implementation of this policy seems to be unproblematic if re-optimization every minute is required. By contrast, the subgame-perfect policy requires to solve optimization problems with horizon lengths up to $T$. In this numerical study, solving such problems takes up to $56.2$sec. It is questionable if such a policy is suitable for real-time implementation, in particular if the total horizon of interest $T$ exceeds $2$h, if larger networks are considered or if additional tasks like state estimation have to be performed.

Finally, we also study the effect of the terminal cost on performance. To this end, we consider only the case $w = w^{(2)}$ and depict the performance with and without terminal constraint in Figure \ref{fig:tts_horizon}, for different control horizon lengths. The policy with terminal cost satisfies the worst-case performance bound for any horizon length, as expected, but it turns out that the policy without terminal constraint performs \emph{worse} than the performance bound computed using the worst-case uncertainty, for horizon length $2$min. This example demonstrates that the performance guarantees of Proposition \ref{proposition:mpc} do not extend to receding horizon policies without terminal constraint. It appears that if the control horizon is chosen long enough, in this case, $T^c \geq 5$min, the terminal cost has negligible influence on performance. This is expected for very long horizons, in particular if $T^c \approx T$, but it is not clear a priori how long exactly the control horizon needs to be chosen in order to safely neglect the terminal constraint.

\section{Conclusions} \label{sec:conclusions} 

In this work, we have considered the robust FNC problem, in which external traffic demands and demand and supply functions are uncertain. We have shown that if merging junctions are controlled and uncertainty sets are given as point-wise bounds on the uncertain quantities, then the robust FNC problem is equivalent to a convex, finite-dimensional optimization problem. This result is based on the insight that for networks with controlled merging junctions, the worst-case uncertainty realization is easy to identify since it is equal to the upper bound on the external demand, respectively the lower bound on demand and supply functions. We have also demonstrated via counterexamples that the same is not necessarily true for networks with uncontrolled merging junctions. The main result, the reformulation of the robust FNC problem as a convex problem, allowed for the design of computationally efficient receding horizon policies, with worst-case performance guarantees. In the numerical study, these policies showed promising performance, also for uncertainty realizations other than the worst-case.

The main contribution of this work is theoretical, with the objective to identify a robust counterpart of the FNC whose solution is tractable, i.e., that can be reduced to a convex optimization problem. Several assumptions have been necessary to do so, in particular, we have assumed that the uncertainty set of the external demand is defined as point-wise, upper bounds. Future work might seek to generalize this assumption. In particular, allowing for general, polyhedral uncertainty sets for the external demands has the potential to allow for less conservative uncertainty sets. For practical application, it will also be necessary to construct uncertainty sets from recorded data. Since the existence of outliers is expected in all sufficiently large, real-world data sets, one might want to use less conservative uncertainty sets, such that individual, future uncertainty realizations lie within these sets with high, but not certain probability. This raises the question if one can combine data-driven construction of these uncertainty sets with controller design using techniques from stochastic optimization -- instead of robust optimization, as in this work -- to obtain probabilistic performance guarantees.

\FloatBarrier
\bibliographystyle{plain}
\bibliography{/Users/mschmitt/Documents/Docs_Latex/MS_bibliography}

\FloatBarrier

\begin{appendix}

\section{Proof of Lemma } \label{appendix:relaxation2} 

\begin{proof} 
Note that problem \eqref{eq:mpc} is the convex relaxation of the optimization problem 
\begin{equation} 
\label{eq:mpc_ap}
\begin{array}{rrll}
 & \underset{\phi( \tau ), \rho( \tau )}{\text{minimize}} & \sum_{ \tau = t+1 }^{\max \{ t+T_c, T \}} l^\top \rho(\tau)  \\ [2ex]
 & \text{subject to} & \text{FNC dynamics \eqref{eq:ctm_densities}, \eqref{eq:ctm_flows} and constraints \eqref{eq:ramp_constraints}, \eqref{eq:ctm_constraints} for $\omega = \overline \omega$.}  \\[1ex]
 & & P L \cdot \big( \rho(t+T_c) - \rho^*(t+T_c) \big) \leq 0  \\[1ex]
 & & \text{Initial state $\rho( t )$ given.}
\end{array}
\end{equation}
If the latter optimization problem is expressed in terms of the TCTM variables, one obtains problem \eqref{eq:mpc_nonconvex}. Due to equivalence of the alternative system representations according to Lemma \ref{lemma:equivalence_tctm}, it is sufficient to show that for any solution to \eqref{eq:mpc}, one can find a solution to \eqref{eq:mpc_ap} such that the controlled flows $\phi_e(t)$ for $e \in \N$ coincide.

For the FNC problem with controlled merging junctions, but without terminal constraints, \cite[Theorem 2]{schmitt2017exact} ensures just that. The proof is based on the existence of a concave, state-monotone reformulation. The theorem extends to systems with additional constraints, such as the terminal constraints at hand, as long as the new constraints are also concave and state-monotone in the reformulated system (see \cite[Corollary 1]{schmitt2017exact}). In particular, consider the \emph{cumulative flows} $\Phi_e(t) := \Delta t \cdot \sum_{\tau = 0}^{T-1} \phi_e(t)$ for $e \in \E$ and the auxiliary states $\hat \Phi_e(t) := -\Phi_e(t)$ for $e \in \N$, as defined in \cite{schmitt2017exact}. Furthermore, we define $\overline W(t) := \sum_{\tau = 0}^{t-1} \overline w(\tau)$. The terminal constraints can be expressed as
\begin{align*}
P L &\cdot \big( \rho^*(t+T_c) - \rho(t+T_c) \big) \\
 &= P L \cdot \rho^*(t+T_c) - p^\top_e L \rho(t) - \Delta t \cdot \sum_{\tau = t}^{t+T_c-1} \bigg( p_e^\top(t) \overline w(\tau) + \sum_{i \in \E} p_e(i) \Big( {\sum}_{j \in \N_S} \beta_{i,j}\phi_j(\tau) \Big) - \phi_e(\tau) \bigg) \\
 &= P L \cdot \rho^*(t+T_c) - p^\top_e L \rho(t) + \Delta t \cdot \bigg( - p_e^\top(t) \big( \overline W(t+T_c)- \overline W(t) \big) \dots \\
 &\hspace{1cm} + \sum_{i \in \E} p_e(i) \cdot \Big( {\sum}_{j \in \N_S} \beta_{i,j} \big( \hat\Phi_j(t+T_c) - \hat\Phi_j(t) \big) \Big) + \big( \Phi_e(t+T_c) - \Phi_e(t) \big) \bigg) \geq 0.
\end{align*}
In the final inequality, $\rho^*(t+T_c)$, $\rho(t)$, $\hat \Phi_e(t)$ (for $e \in \N$), $\Phi_e(t)$, $\overline W(t)$ and $\overline W(t + T_c)$ are known parameters in the optimization problem at time $t$. Only the quantities $\Phi(t + T_c)$ and $\hat \Phi_e(t+T_c)$ (for $e \in \N$) are optimization variables. The right-hand side (RHS) of the final inequality is monotone\footnote{The reformulation in terms of cumulative flows \cite{schmitt2017exact} poses the FNC as a \emph{maximization} problem. For this reason, we need to ensure that the RHS of constraints of the form $g_t( \Phi(t), \hat \Phi(t) ) \geq 0$ is monotone. This is in contrast to Definition \ref{definition:monotone_system}, where we require constraints of the form $g_t( \Phi(t), \hat \Phi(t) ) \leq 0$ (note the order of the inequality!) with monotone RHS. } in these optimization variables, and therefore, a generalization of \cite[Theorem 2]{schmitt2017exact} applies and problems \eqref{eq:mpc} and \eqref{eq:mpc_ap}, and in turn, problem \eqref{eq:mpc_nonconvex}, define identical receding horizon policies.
\end{proof} 

\end{appendix}

\end{document}